\newtheorem{lemma}{Lemma}
\newtheorem{remark}[lemma]{Remark}
\newtheorem{setting}[lemma]{Setting}
\newtheorem{theorem}[lemma]{Theorem}
\begin{document}
	
	\title{Maximum principle for discrete time mean-field stochastic optimal control problems}
	
\date{}

	\author{Arzu Ahmadova$^1$, Nazim I. Mahmudov$^2$
		\bigskip
		\\
	\small{$^1$Faculty of Mathematics, University of Duisburg-Essen, 45127, Essen,
		Germany,\\
		e-mail: $arzu.ahmadova@uni-due.de$\\
		$^2$Department of Mathematics, Eastern Mediterranean University, 99628, T.R. North Cyprus,\\
		e-mail: $nazim.mahmudov@emu.edu.tr$}
	
	\smallskip
	}
	
	\maketitle
		\begin{abstract}
		\noindent In this paper, we study the optimal control of a discrete-time stochastic differential equation (SDE) of mean-field type, where the coefficients can depend on both a function of the law and the state of the process. We establish a new version of the maximum principle for discrete-time stochastic optimal control problems. Moreover, the cost functional is also of the mean-field type. This maximum principle differs from the classical principle since we introduce new discrete-time backward (matrix) stochastic equations. Based on the discrete-time backward stochastic equations where the adjoint equations turn out to be discrete backward SDEs with mean field, we obtain necessary first-order and sufficient optimality conditions for the stochastic discrete optimal control problem. To verify, we apply the result to production and consumption choice optimization problem. \\ \\
		\textit{Keywords:} Discrete time stochastic maximum principle, backward stochastic difference equations, mean-field theory, optimal control problem, necessary and sufficient conditions	
	\end{abstract}
	\tableofcontents

	\section{Introduction}
A large number of problems, interesting from a theoretical point of view and important from a practical one, has attracted the attention of many mathematicians and engineers. It is not surprising that there is no field in which extremal problems do not arise, and in which it is not essential to the development of these fields that such problems should be solved. The development of the necessary conditions for an extremum was the elaboration of convex programming theory. A central place in this theory is occupied by the Kuhn-Tucker theorem. The embedding of the theory of optimal control in a general theory of necessary conditions was first carried out by Milyutin and Dubovitskii \cite{milyutin}. The great importance of their work lies in the fact that they succeeded in formulating in a refined form necessary conditions for an extremum which can be applied to a wide class of problems. 

The maximum principle for discrete-time systems has become a subject of great interest. We begin this section by summarizing some seminal articles in this field. In \cite{holtzman} it was shown that the convexity requirement is not applicable to many practical systems. Holtzman and Halkin \cite{holtzman-halkin} extend the applicability to much broader classes of practical systems under the condition of directional convexity being weaker than convexity.  Moreover, Gamkrelidze \cite{Gamkrelidze} proved a maximum principle for systems with phase constraints under a number of assumptions. A number of original ideas related to proving the maximum principle can be found in the works of Rozenoer \cite{rozeoner}. 

Jordon and Polak \cite{jordanpolak} have also considered the problem for optimal discrete systems and derived a stationary principle. They applied similar arguments to those used in deriving the Pontryagin maximum principle for continuous-time systems \cite{pontr}. Butkovski \cite{butkowski} first showed that, in contrast to the continuous case, a direct extension of Pontryagin's maximum principle to discrete systems is in general impossible. Of course, such a property of these systems is of theoretical interest to researchers. He clearly demonstrated some errors in the existing works. The intrinsic reason for the errors is that the significance of convexity has been ignored. Generally speaking, the discrete-time maximum principle fails unless a certain convexity precondition is imposed on the control system. However, in this connection, some researchers have established additional conditions, such as convexity of the set of admissible velocities of the system, directional convexity and $z$-directional convexity, etc., and found that under these conditions the maximum principle is valid for discrete control systems. 

Many results have been done on this topic for different kinds of continuous-time stochastic optimal control problems, for example \cite{songliu,mahbash,zhang,peng,gk,hu-ji-xue,yongsiam,wu}, and discrete-time stochastic optimal
control problems, see \cite{rami,vinter,muller,blot,wang,mur,lin,mah1,mah2019,wuzhang} and the references therein). The main difficulty of the stochastic maximum principle for an optimal control problem governed by continuous-time stochastic It\^{o} equations is that the stochastic It\^{o} integral is only of order $\varepsilon$ ("hidden convexity" fails). Therefore, the usual method of first-order needle variation fails. To overcome this difficulty, one has to study both the first and second order terms in the Taylor expansion of the needle variation, and establish a stochastic maximum principle consisting of two backward stochastic differential equations and a maximum condition with an additional quadratic term in the diffusion coefficients, see \cite{mahbash,yong}. It should be noted that Lin and Zhang \cite{lin} used spike variations to show that the necessary condition for discrete-time stochastic optimal problems is associated with the solutions of a pair of discrete-time backward stochastic equations. On this basis, they obtained the maximum principle for the discrete-time stochastic optimal control problem. 

As for the discrete maximum principle for mean-field stochastic optimal control problems framework, there are a few papers dealing with discrete-time mean-field stochastic optimal control.  Unlike the classical stochastic control problem, mean-field terms appear in the system dynamics and cost function, connecting mean-field theory to stochastic control problems. The stochastic mean-field control problem has been an important research topic since the 1950s. The system state is described by a controlled mean-field stochastic differential equation (MF-SDE), which was first proposed in \cite{pontr}, and the first study on MF-SDEs was published in \cite{halkin}. Since then, many researchers have made numerous contributions to the study of MF-SDEs and related topics, see, e.g. \cite{holtzman,gk,vinter,yong,Djehiche,Buckdahn} and the references cited therein. 

Among the many scientific articles on discrete stochastic maximum principle, we will mention only a few with comparison and relation that motivate this work:
\begin{itemize}
	\item Song and Liu considered in \cite{songliu} the optimal control problem for fully coupled forward–backward stochastic difference equations of mean-field type under weak
	convexity assumption. Note that the form of (3.6) as an adjoint equation which was introduced in \cite{songliu} is one kind of backward stochastic difference equation. This adjoint equation is quite different from our adjoint equation \eqref{bse} studied in this paper. One the one hand, they have different forms, on the other hand, the adjoint equation (3.6) is $\mathcal{F}_{t+1}$-measurable;
	\item In \cite{wuzhang}, Wu and Zhang studied recently discrete-time stochastic optimal control problem with convex control domains, for which necessary condition in the form of Pontryagin’s maximum principle and sufficient condition of optimality are derived. They also pointed out that how to overcome of integrability problem of the solution to the adjoint equation which was not taken into consideration in \cite{songliu}.
	\item Recently, Mahmudov \cite{mah2019} derived the first-order and second-order necessary optimality conditions for discrete-time stochastic optimal control problems by virtue of new discrete-time backward stochastic equation and backward stochastic matrix equation under assumption of the set\\ $(f,\sigma_{1},\sigma_{2},\ldots,\sigma_{d},l)(t,\bar{x}(t), U(t))$ being convex. Unlike \cite{mah2019} and \cite{wuzhang} we study mean-field type discrete-time stochastic maximum principle. 
\end{itemize}

Based on the above considerations, the main purpose of this paper is to construct a rigorous mathematical framework for a mean-field type of discrete-time stochastic optimal control problems and to obtain a rigorous maximum principle in an understandable way.
We study the maximum principle for the optimal control of discrete-time systems described by mean-field stochastic difference equations. As far as we know, there are few results on such stochastic control problems. In fact, discrete-time control systems are of great value in practice. For example, digital control can be formulated as a discrete-time control problem in which the sampled data are obtained at discrete times. In a discrete-time system, the Riccati difference equation plays an important role in synthesizing the optimal control. As pointed out in \cite{wuzhang}, the integrability of the solution to the adjoint equation in discrete-time stochastic
optimal control problem is completely different from that in the continuous-time case. However, we also prove that the solution of the adjoint equation has no problem of integrability.

The main perspectives of our work are systematized as below:
\begin{itemize}
	\item First, to study discrete stochastic optimal control problems, we use the finite approximation method applied in \cite{mur}. We extend this method to study the discrete-time stochastic backward equation and introduce the discrete-time stochastic backward matrix equation; 
	\item Second, we prove that the solution to the adjoint equation has no problem of integrability;
	\item Next, a constructive method is that when the necessary optimality condition are also sufficient under certain assumptions;
	\item Finally, as an application, we adapt the practical application based on Theorem \ref{thm:1st order} and consider the discrete-time system with some risk in the investment process.
\end{itemize}
The structure of the paper is as follows. In Section 2, we formulate the main results and give an example to show the applicability of our results. Section \ref{section1} is devoted to stating main results of this paper. In Section \ref{sec:bsde}, we introduce the discrete-time backward stochastic equation and the discrete-time backward stochastic matrix equation and present the solutions in terms of a fundamental stochastic matrix. In Section \ref{proof}, we prove the discrete-time stochastic maximum principle: a first-order necessary condition for optimality. Section \ref{sec:5} is devoted to the sufficient condition for optimality. Section \ref{sec:7} is devoted to the application of production and consumption choice optimization problems. 

\section{Mathematical description}\label{secmath}
In Section \ref{secmath} we present in Setting \ref{setting} the mathematical framework which we use to study the discrete-time stochastic optimal control problems of mean-field type.
\begin{setting}\label{setting}
	Let $\left\| \cdot \right\|$ be a norm, $\langle \cdot, \cdot \rangle$ be an inner product, let $n_1, n_2 \in \mathbb{N}$ and denote the space of ($n_1\times n_2$)-matrices by $\mathbb{R}^{n_1\times n_2}$, and let $\mathbb{R}^{n_1}\coloneqq \mathbb{R}^{n_1\times 1}$, that is, each element of $\mathbb{R}^{n_1}$ is understood as a column vector, let $I$ be the unit matrix with appropriate dimension. For each matrix $A$, $A^{\intercal}$ denotes the transpose of $A$. Moreover, the forward difference operator $\Delta$ is defined for all $h>0$ as $\Delta f(t)= f(t+h)-f(t)$. For a vector $x\in \mathbb{R}^{n}$ denote by $x^{\intercal}$ its transpose. For a symmetric matrix $A$ and vectors $y,y_1,y_2$ of matching dimensions, we denote $A[y]^2\coloneqq y^{\intercal}Ay$, $A[y_1,y_2]\coloneqq y_1^{\intercal}Ay_2$, $\widehat{f}[t]\coloneqq f(t,\widehat{x}(t),\textbf{E}\widehat{x}(t),\widehat
	{u}(t))$.\\\\
	Let $(\Omega, \mathfrak{F}, \mathbb{P})$ be a complete probability space and $N$ be a positive integer. $\mathbb{T}:=\left\{  t_{k}=t_{0}+kh,\ h>0\right\}  _{k=0}^{N}$, let $\left\lbrace w(t_{k}): k=1,\ldots,N+1 \right\rbrace $ be a sequence of $\mathfrak{F}_{k}$-measurable $\mathbb{R}^{d}$-valued random variables, and let $\mathfrak{F}_{k}\subseteq \mathfrak{F}$ be the $\sigma$-field generated by $w(t_{1}),\ldots,w(t_{k})$, i.e., $\mathfrak{F}_{k}=\sigma\left\lbrace w(t_{1}),\ldots,w(t_{k})\right\rbrace $, $k=1,\ldots,N+1$, and $\mathfrak{F}_{0}=\left\lbrace \emptyset, \Omega \right\rbrace $. Let the expectation operator $\textbf{E}$ be denoted by $\textbf{E}x(t)=\int_{\Omega}x(t)\mathbb{P}(\mathrm{d}w)$ for each $w\in \Omega$. For each $t\geq  0$, $\textbf{E}\left\lbrace\cdot \mid \mathfrak{F}_{t} \right\rbrace  $ is the conditional expectation given by $\mathfrak{F}_{t}$. Assume for all $k\in \mathbb{N}$ that $ w_{h}(t_{k})\coloneqq w(t_{k+1})-w(t_{k})$ satisfies the following conditions:
	\begin{enumerate}
		\item[(wi)] For every $w_{h}\left(  t_{k}\right)  =\left(  w_{h}^{1}\left(
		t_{k}\right),...,w_{h}^{d}\left(  t_{k}\right)  \right)  ,\ w_{h}^{1}\left(
		t_{k}\right)  ,...,w_{h}^{d}\left(  t_{k}\right)  $ are independent
		$\mathbb{R}$-valued random variables.
		
		\item[(wii)] $\textbf{E}\left\{  w_{h}\left(  t_{k}\right)  \mid
		\mathfrak{F}_{k}\right\}  =0$, \quad $\textbf{E}\left\{  \left(  w_{h}^{j}\left(
		t_{k}\right)  \right)  ^{2}\mid\mathfrak{F}_{k}\right\}  =h,\\
		\textbf{E}\left(  w_{h}^{j}\left(  t_{k}\right)  \right)  ^{4}<\infty,$ \quad 
		$\textbf{E}\left( w_{h}^{m}\left(  t_{k}\right)  w_{h}^{l}\left(  t_{k}\right)\right) 
		=\left(  t_{k+1}-t_{k}\right)  \delta_{ml}I$.
	\end{enumerate}
	Moreover, let $\widehat{\mathfrak{F}}_{k}=\sigma\left\{  w_{h}\left(
	t_{k+1}\right),\ldots,w_{h}\left(  t_{N}\right)  \right\}$.  Note that $\mathfrak{F}_{k}$ and $\widehat{\mathfrak{F}}_{k}$ are
	independent. Let for all $v \in \mathbb{R}^{r}$ $\Delta f(t,v)\coloneqq f(t, \widehat{x}(t),\textbf{E}\widehat{x}(t),v)-f(t, \widehat{x}(t),\textbf{E}\widehat{x}(t),\widehat
	{u}(t))$,  and let $\mathbb{F}=\left\{  \mathfrak{F}_{k}:k=0,1,...,N\right\}$ be the set.
	A random variable $z=\left\{  z_{k}:k=0,1,...,N\right\}  $ is called
	$\mathbb{F}$-predictable if the random variable
	$z_{k}$ is $\mathfrak{F}_{k}$-measurable for every $k=0,1,...,N,$. Let $L^{2}(\Omega, \mathfrak{F}_{t_k}, \mathbb{R}^{n})$ be the set of all $\mathbb{R}^{n}$-valued $\mathfrak{F}_{t_k}$-measurable random variables $x(t_{k})$ with $\textbf{E}\|x(t_{k})\|^{2}<\infty$.
\end{setting}

	\section{Statement of main results}\label{section1}

In Section \ref{section1}, we establish a class of discrete-time stochastic nonlinear optimal control problems of mean-field type. The system equation is the following nonlinear stochastic difference equation:
\allowdisplaybreaks
\begin{equation}\label{op1}
	\begin{cases}
		x\left(  t+h\right)  =x(t)+hf\left(t,x(t)
		,\mathbf{E}x(t)  ,u(t)\right)+ \\ \sum\limits_{j=1}^{d}	\sigma^{j}\left(  t,x(t) ,\mathbf{E}x(t),u(t)\right)  w_{h}^{j}\left(  t\right),\\
		\textbf{E}x\left(  t+h\right)  =\textbf{E}x\left(  t\right)
		+h\textbf{E}f\left(  t,x\left(  t\right)  ,\mathbf{E}x\left(  t\right)
		,u\left(  t\right)  \right), \\
		x\left(  t_{0}\right)  =\xi\in\mathbb{R}^{n},\ u\left(  t\right)  \in
		\mathcal{U}(t)\subset\mathbb{R}^{r},\quad t\in \mathbb{T}.
	\end{cases}
\end{equation}
Note that the initial value $\xi$ and $\left\lbrace w(t_{k}), k\in 0,1, \ldots N\right\rbrace $ are assumed to be independent of each other. Let $\varphi
:\mathbb{R}^{n}\times \mathbb{R}^{n}\rightarrow \mathbb{R},\ \left(  l,f,\sigma^{j}\right)  :\mathbb{T}\times
\mathbb{R}^{n}\times \mathbb{R}^{n}\times \mathbb{R}^{r}\rightarrow \mathbb{R}\times \mathbb{R}^{n}\times \mathbb{R}^{n}.$ $\left\{
w\left(  t_{k}\right)  :k=1,\dots,N+1\right\}  $ be a sequence of $\mathfrak{F}_{k}
$-measurable $\mathbb{R}^{d}$-valued random variables. 
Then the optimal control problem minimizes the following expected cost functional defined by:
\begin{equation}\label{cost}
	\mathfrak{J}(u)  =\textbf{E}\varphi \left( x\left(  t_{N+1}\right)
	,\mathbf{E}x\left(  t_{N+1}\right)  \right) +\textbf{E}\sum\limits_{t=t_{0}}^{t_{N}}l\left(  t,x\left(  t\right)  ,\mathbf{E}x\left(  t\right)  ,u\left(
	t\right)  \right)  \longrightarrow\min. \\
\end{equation}
The cost functional \eqref{cost} is also of mean-field type since both the running and terminal cost functions $l$ and $\varphi$ depend on the state process through their expected values.\\
Observe from \eqref{op1} and \eqref{cost} that $\mathbf{x}=\left\{  x\left(  t_{k}\right)  \right\}  _{k=0}^{N+1}$ and
$\mathbf{u}=\left\{  u\left(  t_{k}\right)  \right\}  _{k=0}^{N}$ are the state process
and control process, respectively. Let $\left\{  \mathcal{U}(t_{k})
\right\}  _{k=0}^{N}$ be a sequence of nonempty convex subset of $\mathbb{R}^{r}$. We
introduce the following admissible control set%
\[
\mathfrak{U}_{ad}=\left\{  u=\left\{  u\left(  t_{k}\right)  \right\}  _{k=0}^{N}:u\left(  t_{k}\right)  \in L^{2}\left(  \Omega,\mathfrak{F}_{t_{k}},\mathbb{R}^{r}\right)\quad \text{and} \quad u(t_{k})\in \mathcal{U}(t_{k})\right\} .
\]
The pair $\left( \mathbf{x},\mathbf{u}\right)  $ satisfying the constraints \eqref{op1} is
called an admissible pair, and the pair $\left(  \widehat{\mathbf{x}},\widehat
{\mathbf{u}}\right)  $ which is a solution of the problem \eqref{op1}-\eqref{cost}, is called an optimal pair. Our optimal control problem can be stated as follows:

\textbf{Problem ($\mathcal{DOPC}$)}. Minimize \eqref{cost} over $\mathcal{U}(t)$. Any $\widehat{u}(\cdot)\in \mathcal{U}(t)$ satisfying 

\begin{equation*}
	\mathfrak{J}(\widehat{u}(\cdot))=\inf_{v\in \mathcal{U}(t)}\mathfrak{J}(v(\cdot))
\end{equation*}
is called an optimal control. The corresponding $\widehat{x}(\cdot)=\widehat{x}(\cdot,\widehat{u})$ and $(\widehat{x}(\cdot),\widehat{u}(\cdot))$	are called optimal state process and optimal pair, respectively.

Throughout the paper we use the following assumptions.

\begin{description}
	\item[(A1)]  Let $\psi\coloneqq l,f,\sigma^{j},\varphi$. There exists a constant $L>0$ such that
	\begin{align*}
		\left\Vert \psi\left(  t,x_{1},y_{1},u_{1}\right)  -\psi\left(
		t,x_{2},y_{2},u_{2}\right)  \right\Vert  &  \leq L\left(  \left\Vert
		x_{1}-x_{2}\right\Vert +\left\Vert y_{1}-y_{2}\right\Vert +\left\Vert
		u_{1}-u_{2}\right\Vert \right)  ,\ \\
		\left\Vert \psi\left(  t,0,0,0\right)  \right\Vert  &  \leq L,\quad t\in \mathbb{T},\quad x_{1},x_{2},y_{1},y_{2}\in\mathbb{R}^{n},\ u_{1},u_{2}\in\mathbb{R}^{r}.
	\end{align*}
	
	\item[(A2)] Let $\psi\coloneqq l,f,\sigma^{j},\varphi$ be continuously differentiable with respect to $x$. Moreover, there exists a constant $L_{1}>0$ such that
	\begin{align*}
		\left\Vert \psi_{x}\left(  t,x_{1},y_{1},u_{1}\right)  -\psi_{x}\left(
		t,x_{2},y_{2},u_{2}\right)  \right\Vert  &  \leq L_{1}\left(  \left\Vert
		x_{1}-x_{2}\right\Vert +\left\Vert y_{1}-y_{2}\right\Vert +\left\Vert
		u_{1}-u_{2}\right\Vert \right)  ,\ \\
		\left\Vert \psi_{x}\left(  t,0,0,0\right)  \right\Vert  &  \leq L_{1},\quad t\in \mathbb{T},\quad x_{1},x_{2},y_{1},y_{2}\in\mathbb{R}^{n},\ u_{1},u_{2}\in\mathbb{R}^{r};
	\end{align*}

	\item[(A3)] The set $\left\lbrace \mathcal{U}(t): t\in \mathbb{T}\right\rbrace $ is convex;
	\item[(A4)] The set $(f,\sigma_{1},\sigma_{2},\ldots,\sigma_{d},l)(t,\bar{x}(t), \mathcal{U}(t))$  is convex.
\end{description}
Under the assumptions (A1) and (A2), $J$ is well-defined on $\mathcal{U}$.\\
Now we state the first main result of the paper: first-order necessary conditions for the problem \eqref{op1}-\eqref{cost}.

\begin{theorem}\label{thm:1st order}
	Assume Setting \ref{setting}, let $\left(  \widehat{\mathbf{x}},\widehat{\mathbf{u}}\right)  $ be an optimal pair in problem (\ref{op1}), and assume that assumptions (A1)-(A4) hold. Moreover,
	assume that the function $\varphi$ satisfy the Lipschitz condition in a
	neighborhood of the point $\widehat{x}\left(  t_{N+1}\right)  $ and differentiable at that point. Then there exists a solution $\left(p,q^{1},...,q^{d}\right)  :\mathbb{T}\times\Omega\rightarrow\mathbb{R}^{n}\times\mathbb{R}^{n}\times\mathbb{R}^{n\times d}$ of the discrete-time backward stochastic equation
	\begin{align}\label{bse}
		\begin{cases}
			p\left(  t\right)  =\left(  I+h\widehat{f}_{x}^{\intercal}\left[  t\right]  \right)
			\mathbf{E}\left\{  p\left(  t+h\right)  \mid\mathfrak{F}_{t}\right\}
			+\mathbf{E}\left\{  \widehat{f}_{y}^{\intercal}\left[  t\right]  p\left(  t+h\right)
			\right\}\\
			+\sum_{j=1}^{d}\left(  \widehat{\sigma}_{x}^{j}\left[  t\right]  \right)^{\intercal}q^{j}(t) +\sum_{j=1}^{d}\mathbf{E}\left\{  \left(  \widehat{\sigma}_{y}^{j}\left[  t\right]  \right)
			^{\intercal}q^{j}\left(  t\right)  \right\}-\widehat{l}_{x}[t]-\textbf{E}\widehat{l}_{y}[t],\\
			p\left(  t_{N+1}\right)  =-\varphi_{x}\left(  x\left(  t_{N+1}\right)
			,\mathbf{E}x\left(  t_{N+1}\right)  \right)  -\textbf{E}\varphi_{y}\left(  x\left(
			t_{N+1}\right)  ,\mathbf{E}x\left(  t_{N+1}\right)  \right),  \\
			q^{j}\left(  t\right)  =\mathbf{E}\left\{  p\left(  t+h\right)  w_{h}^{j}\left(  t\right)  \mid\mathfrak{F}_{t}\right\},
		\end{cases}
	\end{align}
 Note that for any $t\in \mathbb{T}$ and for any $v\in \mathcal{U}\left(  t\right)$ the following Hamiltonian function $H$ as follows:
	\begin{equation}\label{max1}
		\left\langle H_{u}\left(  t,\widehat{u}\left(  t\right)  \right)
		,v-\widehat{u}\left(  t\right)  \right\rangle \leq0,\ a.s. ,
	\end{equation}
	where%
	\begin{align*}
		H\left(  t,v\right)   & \coloneqq H\left(  t,p\left(  t+h\right)  ,q\left(  t\right)
		,\widehat{x}\left(  t\right)  ,v\right) \\
		&  =\left\langle \mathbf{E}\left\{  p\left(  t+h\right)  \mid\mathfrak{F}%
		_{t}\right\}  ,hf\left(  t,\widehat{x}\left(  t\right)  ,\mathbf{E}\widehat
		{x}\left(  t\right)  ,v\right)  \right\rangle +\sum_{J=1}^{d}\left\langle
		q^{j}\left(  t\right)  ,\sigma^{j}\left(  t,\widehat{x}\left(  t\right)
		,\mathbf{E}\widehat{x}\left(  t\right)  ,v\right)  \right\rangle \\
		&  -l\left( t,\widehat{x}\left(  t\right)  ,\mathbf{E}\widehat
		{x}\left(  t\right)  ,v\right).
	\end{align*}	
\end{theorem}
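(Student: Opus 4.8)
The plan is to combine a convex perturbation of the optimal control with a discrete duality identity built from the backward equation~\eqref{bse}. The existence and square-integrability of the adjoint triple $\left(p,q^{1},\dots,q^{d}\right)$ are supplied by the construction of Section~\ref{sec:bsde} (in particular the fundamental-matrix representation given there), so the real content of the argument is the derivation of the variational inequality~\eqref{max1}. Fix $v\in\mathfrak{U}_{ad}$ and, for $\varepsilon\in[0,1]$, set $u^{\varepsilon}(t):=\widehat{u}(t)+\varepsilon\bigl(v(t)-\widehat{u}(t)\bigr)$, which lies in $\mathfrak{U}_{ad}$ by~(A3); let $x^{\varepsilon}$ be the corresponding state solving~\eqref{op1}. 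Using the Lipschitz bounds~(A1)--(A2) and a discrete Gronwall estimate over the finitely many steps, one shows $\mathbf{E}\norm{x^{\varepsilon}(t)-\widehat{x}(t)}^{2}=O(\varepsilon^{2})$ uniformly in $t$ and the first-order expansion $x^{\varepsilon}(t)=\widehat{x}(t)+\varepsilon\,z(t)+o(\varepsilon)$ in $L^{2}$, where the variational process $z$ solves the linear mean-field difference equation
\begin{align*}
z(t+h)&=z(t)+h\bigl(\widehat{f}_{x}[t]\,z(t)+\widehat{f}_{y}[t]\,\mathbf{E}z(t)+\widehat{f}_{u}[t]\,(v(t)-\widehat{u}(t))\bigr)\\
&\quad+\sum_{j=1}^{d}\bigl(\widehat{\sigma}_{x}^{j}[t]\,z(t)+\widehat{\sigma}_{y}^{j}[t]\,\mathbf{E}z(t)+\widehat{\sigma}_{u}^{j}[t]\,(v(t)-\widehat{u}(t))\bigr)w_{h}^{j}(t),\qquad z(t_{0})=0,
\end{align*}
together with the companion relation $\mathbf{E}z(t+h)=\mathbf{E}z(t)+h\,\mathbf{E}\bigl(\widehat{f}_{x}[t]z(t)+\widehat{f}_{y}[t]\mathbf{E}z(t)+\widehat{f}_{u}[t](v(t)-\widehat{u}(t))\bigr)$.

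Differentiating the cost~\eqref{cost} along $\varepsilon\mapsto u^{\varepsilon}$ at $\varepsilon=0^{+}$ (the interchange of $\tfrac{d}{d\varepsilon}$ with $\mathbf{E}$ being legitimate by the Lipschitz property of $\varphi$ near $\widehat{x}(t_{N+1})$ and its differentiability there), the optimality of $\bigl(\widehat{\mathbf{x}},\widehat{\mathbf{u}}\bigr)$ gives
\begin{align*}
0\le\mathbf{E}\bigl\langle\varphi_{x}+\mathbf{E}\varphi_{y},\,z(t_{N+1})\bigr\rangle+\mathbf{E}\sum_{t=t_{0}}^{t_{N}}\Bigl(\bigl\langle\widehat{l}_{x}[t]+\mathbf{E}\widehat{l}_{y}[t],\,z(t)\bigr\rangle+\bigl\langle\widehat{l}_{u}[t],\,v(t)-\widehat{u}(t)\bigr\rangle\Bigr),
\end{align*}
where the mean-field contributions of $l$ and $\varphi$ have been collapsed via the elementary identity $\mathbf{E}\langle a,\mathbf{E}z(t)\rangle=\mathbf{E}\langle\mathbf{E}a,z(t)\rangle$. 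Next, apply the discrete product rule $\Delta\langle p(t),z(t)\rangle=\langle p(t+h),z(t+h)-z(t)\rangle+\langle p(t+h)-p(t),z(t)\rangle$, sum over $t=t_{0},\dots,t_{N}$, and substitute the $z$-dynamics together with~\eqref{bse}. The martingale increments are handled with~(wi)--(wii): since $\widehat{\sigma}_{x}^{j}[t],\widehat{\sigma}_{y}^{j}[t],\widehat{\sigma}_{u}^{j}[t]$ and $z(t)$ are $\mathfrak{F}_{t}$-measurable, $\mathbf{E}\langle p(t+h),\widehat{\sigma}_{x}^{j}[t]z(t)w_{h}^{j}(t)\rangle=\mathbf{E}\langle(\widehat{\sigma}_{x}^{j}[t])^{\intercal}q^{j}(t),z(t)\rangle$, and likewise for the $y$- and $u$-blocks; the drift terms are treated by pulling $\mathbf{E}\{\cdot\mid\mathfrak{F}_{t}\}$ through the $\mathfrak{F}_{t}$-measurable factors, and each $\mathbf{E}z(t)$ is moved back onto $z(t)$ as above. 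By the very design of~\eqref{bse}, every term carrying $z(t)$ cancels pairwise, the boundary terms reduce (using $z(t_{0})=0$ and $p(t_{N+1})=-\varphi_{x}-\mathbf{E}\varphi_{y}$) to $-\mathbf{E}\langle\varphi_{x}+\mathbf{E}\varphi_{y},z(t_{N+1})\rangle$, and, recalling the definition of $H$, the remaining terms reassemble into the identity
\begin{align*}
\mathbf{E}\sum_{t=t_{0}}^{t_{N}}\bigl\langle H_{u}(t,\widehat{u}(t)),\,v(t)-\widehat{u}(t)\bigr\rangle=-\mathbf{E}\bigl\langle\varphi_{x}+\mathbf{E}\varphi_{y},z(t_{N+1})\bigr\rangle-\mathbf{E}\sum_{t=t_{0}}^{t_{N}}\Bigl(\bigl\langle\widehat{l}_{x}[t]+\mathbf{E}\widehat{l}_{y}[t],z(t)\bigr\rangle+\bigl\langle\widehat{l}_{u}[t],v(t)-\widehat{u}(t)\bigr\rangle\Bigr),
\end{align*}
whose right-hand side is precisely the negative of the right-hand side of the cost inequality above. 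Comparing the two yields $\mathbf{E}\sum_{t=t_{0}}^{t_{N}}\langle H_{u}(t,\widehat{u}(t)),v(t)-\widehat{u}(t)\rangle\le0$ for every $v\in\mathfrak{U}_{ad}$.

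To pass from this integrated inequality to the pointwise statement~\eqref{max1}, fix $t_{k}\in\mathbb{T}$, a point $w\in\mathcal{U}(t_{k})$ and an event $A\in\mathfrak{F}_{t_{k}}$, and choose $v(t_{j})=\widehat{u}(t_{j})$ for $j\ne k$ and $v(t_{k})=\mathbf{1}_{A}\,w+\mathbf{1}_{\Omega\setminus A}\,\widehat{u}(t_{k})$; this is admissible because $\mathcal{U}(t_{k})$ is convex and $\widehat{u}(t_{k})\in\mathcal{U}(t_{k})$ a.s. Then $\mathbf{E}\bigl[\mathbf{1}_{A}\langle H_{u}(t_{k},\widehat{u}(t_{k})),w-\widehat{u}(t_{k})\rangle\bigr]\le0$ for every $A\in\mathfrak{F}_{t_{k}}$; since $H_{u}(t_{k},\widehat{u}(t_{k}))$ is $\mathfrak{F}_{t_{k}}$-measurable (it is assembled from $\mathbf{E}\{p(t_{k}+h)\mid\mathfrak{F}_{t_{k}}\}$, $q^{j}(t_{k})$ and coefficients evaluated along the optimal trajectory, all $\mathfrak{F}_{t_{k}}$-measurable), this forces $\langle H_{u}(t_{k},\widehat{u}(t_{k})),w-\widehat{u}(t_{k})\rangle\le0$ a.s. Letting $w$ run over a countable dense subset of $\mathcal{U}(t_{k})$ and using linearity in $w$ then gives~\eqref{max1} for every $v\in\mathcal{U}(t_{k})$, a.s., which completes the argument.

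The principal obstacle is the bookkeeping in the duality step: one must verify that every $z(t)$-term produced by the four blocks $h\widehat{f}_{x}$, $h\widehat{f}_{y}$, $\widehat{\sigma}_{x}^{j}$, $\widehat{\sigma}_{y}^{j}$ of the variational dynamics — after taking expectations and conditioning on $\mathfrak{F}_{t}$ — is annihilated by the correspondingly transposed block of~\eqref{bse}. It is exactly here that the mean-field terms $\mathbf{E}\{\widehat{f}_{y}^{\intercal}[t]\,p(t+h)\}$ and $\mathbf{E}\{(\widehat{\sigma}_{y}^{j}[t])^{\intercal}q^{j}(t)\}$ in~\eqref{bse} — the feature that distinguishes it from a classical adjoint equation — are forced: they are precisely what cancels the $\mathbf{E}z(t)$ contributions stemming from the law-dependence of $f$ and $\sigma^{j}$. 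The remaining ingredients are routine: the $L^{2}$-estimates for $z$ and for the first-order Taylor remainders follow from the uniform Lipschitz bounds in~(A1)--(A2), while the moment conditions~(wii) ensure that all conditional expectations appearing in~\eqref{bse} and in $H$ are square-integrable, which is the integrability issue emphasized in the introduction.
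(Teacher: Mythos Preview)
Your proposal is correct and follows essentially the same strategy as the paper: linearize the state under a convex perturbation, expand the cost to first order, and use the summation-by-parts identity~\eqref{identity} together with the adjoint equation~\eqref{bse} to cancel the $z$-terms and extract the variational inequality. The only cosmetic difference is that the paper perturbs at a \emph{single} time $\theta$ via the spike $u^{\varepsilon}(t)=\widehat{u}(t)+\delta_{t\theta}\varepsilon\Delta v$ (so the inequality at $\theta$ drops out directly), whereas you perturb at all times and then localize at the end; the duality computation and the role of the mean-field blocks $\mathbf{E}\{\widehat{f}_{y}^{\intercal}p\}$, $\mathbf{E}\{(\widehat{\sigma}_{y}^{j})^{\intercal}q^{j}\}$ are identical in both.
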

\begin{remark}
It is well-known that the adjoint equation in the continuous-time case admits a square-integrable solution under the classical assumptions. However, the solution $\left\lbrace (p(t_{k}),q(t_{k}))\right\rbrace, k\in \mathbb{T}$ of \eqref{bse} has a problem of integrability. Nevertheless, we overcome this problem based on the following discussion. Since $w(t_{k})$ are square-integrable, then $x(t_{k}), k\in \mathbb{T}$ are square integrable. \\
By Assumption $(A2)$ and \eqref{bse}, we obtain that
	\begin{align}\label{01}
		\textbf{E}\left | p\left(  t_{N+1}\right)\right |^{2}<\infty.
	\end{align}
The fact that property (wii) and \eqref{01} imply that 
	\begin{align}\label{02}
	\textbf{E}\left |q^{j}\left(  t_{N}\right)\right |^{2}<\infty.
\end{align}
Substituting \eqref{01} and \eqref{02} into \eqref{bse} and assumption $(A2)$ ensure for all positive integer $N$ that 
\begin{align}
	\textbf{E}\left |p(t_{N})\right |^2<\infty.
\end{align}
Therefore, for all $k\in \mathbb{T}$, $\left\lbrace  (p(t_{k}),q(t_{k}))\right\rbrace $ is square integrable, so $\mathbf{E}|p(t_{k})||x(t_{k})|$ and $\mathbf{E}|q(t_{k})||x(t_{k})|$ are well-defined, which implies that the sufficient condition (Theorem \ref{sufH} works). On the other hand,
$\left\lbrace u(t_{k}), k\in \mathbb{T}\right\rbrace $ are square-integrable, then in general we can still get the square-integrability of $\left\lbrace x(t_{k}), k\in \mathbb{T}\right\rbrace $, and $\left\lbrace (p(t_{k}),q(t_{k}))\right\rbrace, k\in \mathbb{T}$ are square-integrable which also means that the proof of the sufficient condition works. In addition, $\mathbf{E}|p(t_{k})||u(t_{k})|$ and $\mathbf{E}|q(t_{k})||u(t_{k})|$ are well-defined for all admissible control $u=\left\lbrace u(t_{k}), k\in \mathbb{T}\right\rbrace $ which implies that the proof of necessary condition (Theorem \ref{thm:1st order}) works. Therefore, all the expectations involving $\left\lbrace (p(t_{k}),q(t_{k}))\right\rbrace, k\in \mathbb{T}$ in this paper are well-defined.

\end{remark}
\section{Backward Stochastic Difference Equations}\label{sec:bsde}

In this section, we first define the discrete-time mean-field type backward stochastic equations.\\
Let $\phi=\left\{  \phi\left(  t_{k}\right)  \right\}  _{k=0}^{N},$
$\ \psi^{j}=\left\{  \psi^{j}\left(  t_{k}\right)  \right\}  _{k=0}^{N},$
$\phi\left(  t\right)  \in L^{2}\left(  \Omega,\mathfrak{F}_{t},\mathbb{R}%
^{n}\right)  ,$ and $\psi^{j}\left(  t\right)  \in L^{2}\left(  \Omega
,\mathfrak{F}_{t},\mathbb{R}^{n}\right)  $, let $A(t), A_{1}(t) \in L^{2}\left(  \Omega,\mathfrak{F}_{t},\mathbb{R}^{n\times n}\right)$, and for all $j=1,...,d$ $B^{j}(t), B_{1}^{j}(t) \in L^{2}\left(  \Omega,\mathfrak{F}_{t},\mathbb{R}^{n\times n}\right)  $ be uniformly bounded
$\mathbb{R}^{n\times n}$-valued random matrices. Assume that $z\left(  t\right)  $
satisfies the following discrete time difference equation
\begin{align}\label{l1}
	\begin{cases}
		z\left(  t+h\right)  =\left(  I+A\left(  t\right)  \right)  z\left(  t\right)
		+A_{1}\left(  t\right)  \mathbf{E}z\left(  t\right)  +\phi\left(  t\right)\\
		+\sum_{j=1}^{d}	\left(  B^{j}\left(  t\right)  z\left(  t\right)  +B_{1}^{j}\left(  t\right)
		\mathbf{E}z\left(  t\right)  +\psi^{j}\left(  t\right)  \right) w_{h}^{j}\left(  t\right) -l(t) ,\\
		z\left(  t_{0}\right)  =\xi\in\mathbb{R}^{n},\ t\in \mathbb{T}.
	\end{cases}
\end{align}
For arbitrary indices $t_{l},t_{k}\in \mathbb{T}\cup\left\{  t_{N+1}\right\}  $, we
introduce the $n\times n$ matrix $\Phi\left(  t_{l},t_{k}\right)  $,
\begin{align}
	\Phi\left(  t_{l},t_{k}\right)   &  =\left\{
	\begin{tabular}
		[c]{lll}
		$0$ & for & $l<k,$\\
		$I$ & for & $l=k,$\\
		$\Theta\left(  t_{l-1}\right)  \Theta\left(  t_{l-2}\right)  ...\Theta\left(
		t_{k}\right)  $ & for & $l>k,$
	\end{tabular}
	\ \ \ \ \ \ \right. \label{fd1}\\
	\Theta\left(  t_{k}\right)   &  =\left(  I+A\left(  t\right)  \right)
	+A_{1}\left(  t\right)  \mathbf{E}\left\{  \cdot\right\}  +\sum_{j=1}^{d}
	\left(  B^{j}\left(  t\right)  +B_{1}^{j}\left(  t\right)  \mathbf{E}\left\{
	\cdot\right\}  \right)  w_{h}^{j}\left(  t\right)  . \label{fd11}
\end{align}
Observe from \eqref{fd1} and \eqref{fd11} that
\begin{equation}
	\Phi\left(  t_{l},t_{0}\right)  =\Phi\left(  t_{l},t_{k}\right)  \Phi\left(
	t_{k},t_{0}\right)  ,\\ l\geq k\geq0. \label{fd2}
\end{equation}
Then the solution of difference equation \eqref{l1} can be written as
\begin{equation}
	z\left(  t\right)  =\Phi\left(  t,t_{0}\right)  \xi+\sum_{\tau=t_{0}}^{t-h}
	\Phi\left(  t,\tau+h\right)  \left(  \phi\left(  \tau\right)  +\sum_{j=1}^{d}
	\psi^{j}\left(  \tau\right)  w_{h}^{j}\left(  \tau\right)  \right)  ,\ t\in \mathbb{T}.
	\label{fd3}
\end{equation}
Let $h_{N+1}\in L^{2}\left(  \Omega,\mathfrak{F}_{t_{N+1}},\mathbb{R}%
^{n}\right)  $ and $l\left(  t\right)  \in L^{2}\left(  \Omega,\mathfrak{F}_{t},\mathbb{R}^{n}\right)$ for all $t\in \mathbb{T}$. Construct a pair of discrete-time
backward stochastic equations $p=\left(  p\left(  t_{0}\right)  ,p\left(
t_{1}\right)  ,\ldots,p\left(  t_{N+1}\right)  \right)  $, $q^{j}=\left(
q^{j}\left(  t_{0}\right)  ,q^{j}\left(  t_{1}\right)  ,\ldots,q^{j}\left(
t_{N}\right)  \right)  $  corresponding to \eqref{l1} as follows:
\begin{align}\label{dbsde}
	\begin{cases}
		p\left(  t\right) =\left(  I+A^{\intercal}\left(  t\right)  \right)
		\mathbf{E}\left\{  p\left(  t+h\right)  \mid\mathfrak{F}_{t}\right\}
		+\mathbf{E}\left\{  A_{1}^{\intercal}\left(  t\right)  p\left(  t+h\right)
		\right\}\\
		+\sum\limits_{j=1}^{d}\left( \left(  \sigma_{x}^{j}\left[  t\right]  \right)  ^{\intercal}q^{j}\left(
		t\right)  +\mathbf{E}\left\{  \left(  \sigma_{y}^{j}\left[  t\right]  \right)
		^{\intercal}q^{j}\left(  t\right)  \right\}\right)-l(t),\\
		p(t_{N+1})=-h_{N+1}, \quad q^{j}(t)=\textbf{E}\left\lbrace p(t+h)w^{j}_{h}(t) \mid \mathfrak{F}_{t}\right\rbrace,\quad  t \in \mathbb{T}.
	\end{cases}
\end{align}

\begin{lemma}[\cite{mah1}]\label{Lem:0}  
	Discrete-time backward stochastic equation
	\eqref{dbsde} has a unique solution $\left(  p,q\right)  $ such that $p\left(
	t\right)  \in L^{2}\left(  \Omega,\mathfrak{F}_{t},\mathbb{R}^{n}\right)
	,q\left(  t\right)  \in L^{2}\left(  \Omega,\mathfrak{F}_{t},\mathbb{R}%
	^{n\times d}\right)  $, and has the following representation%
	\begin{align}
		p\left(  t\right)   &  =-\mathbf{E}\left\{  \Phi^{\intercal}\left(
		t_{N+1},t\right)  h_{N+1}+\sum_{s=t}^{t_{N}}
		\Phi^{\intercal}\left(  s,t\right)  l\left(  s\right)  \mid\mathfrak{F}_{t}\right\}  ,\ \ \nonumber\\
		\ p\left(  t_{N+1}\right)&  =-h_{N+1},\ q^{j}\left(  t\right)
		=\mathbf{E}\left\{  p\left(  t+h\right)  w_{h}^{j}\left(  t\right)
		\mid\mathfrak{F}_{t}\right\}  ,\ \label{pq}
	\end{align}
	where $t\in \mathbb{T},\ j=1,\ldots,d.$
\end{lemma}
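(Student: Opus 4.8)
Equation \eqref{dbsde} is a \emph{backward} recursion that can be solved explicitly step by step, so I would split the argument into well-posedness and the closed-form representation. For well-posedness, start from the terminal value $p(t_{N+1})=-h_{N+1}\in L^2(\Omega,\mathfrak{F}_{t_{N+1}},\mathbb{R}^n)$ and, assuming $p(t+h)$ has already been constructed and lies in $L^2(\Omega,\mathfrak{F}_{t+h},\mathbb{R}^n)$, define $q^j(t):=\mathbf{E}\{p(t+h)w_h^j(t)\mid\mathfrak{F}_t\}$ and then $p(t)$ by the first line of \eqref{dbsde}. Both are manifestly $\mathfrak{F}_t$-measurable, since conditional expectation given $\mathfrak{F}_t$ returns $\mathfrak{F}_t$-measurable random variables and the matrices $A,A_1,B^j,B_1^j$ are $\mathfrak{F}_t$-adapted and uniformly bounded; conversely any solution must satisfy exactly these identities, so the solution is unique. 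The only genuine point to check is that square-integrability propagates backward: this uses the moment bounds of property (wii) (in particular $\mathbf{E}|w_h^j(t)|^4<\infty$ together with $\mathbf{E}\{(w_h^j(t))^2\mid\mathfrak{F}_t\}=h$), a Cauchy--Schwarz estimate for the product $p(t+h)w_h^j(t)$, the uniform bound on the coefficient matrices, $l(t)\in L^2(\Omega,\mathfrak{F}_t,\mathbb{R}^n)$, and the fact that the horizon consists of only finitely many steps, so that after at most $N+1$ iterations all the $p(t_k),q^j(t_k)$ remain in $L^2$. (This is exactly the integrability issue flagged in the Remark above, and finiteness of the grid is what makes it harmless.)

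For the representation \eqref{pq} I would verify directly that the right-hand side of \eqref{pq} solves \eqref{dbsde} and then invoke the uniqueness just established; equivalently, argue by backward induction on $t\in\mathbb{T}\cup\{t_{N+1}\}$. The base case $t=t_{N+1}$ is immediate because $\Phi(t_{N+1},t_{N+1})=I$ by \eqref{fd1}, the sum over $s$ is empty, and $h_{N+1}$ is $\mathfrak{F}_{t_{N+1}}$-measurable, so \eqref{pq} returns $-h_{N+1}=p(t_{N+1})$. For the inductive step I would use the multiplicative (semigroup) property of $\Phi$ from \eqref{fd1}--\eqref{fd11}, namely $\Phi(s,t)=\Phi(s,t+h)\,\Theta(t)$ for $s\ge t+h$ together with $\Phi(t,t)=I$, to peel off the $s=t$ term of the sum in \eqref{pq} --- which contributes exactly $-l(t)$, since $l(t)$ is $\mathfrak{F}_t$-measurable --- and to factor $\Theta(t)$ out of the remaining terms. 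Plugging in the inductive hypothesis for $p(t+h)$ and computing $\mathbf{E}\{p(t+h)\mid\mathfrak{F}_t\}$, $\mathbf{E}\{A_1^\intercal(t)p(t+h)\}$ and $q^j(t)=\mathbf{E}\{p(t+h)w_h^j(t)\mid\mathfrak{F}_t\}$ with the help of the identities in (wi)--(wii) ($\mathbf{E}\{w_h^j(t)\mid\mathfrak{F}_t\}=0$, $\mathbf{E}\{(w_h^j(t))^2\mid\mathfrak{F}_t\}=h$, conditional orthogonality of distinct components) and of the independence of $\mathfrak{F}_t$ and $\widehat{\mathfrak{F}}_t$, the various pieces collapse and match, term by term, the right-hand side of the first line of \eqref{dbsde}. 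A conceptually cleaner route to the same identity, which I would at least mention, is discrete duality: a summation-by-parts (``discrete It\^o'') computation for $\langle p(t),z(t)\rangle$ along the forward equation \eqref{l1} and its explicit solution \eqref{fd3}, matched against the adjoint of the affine operator $\Theta(t)$ with respect to the pairing $\mathbf{E}\langle\cdot,\cdot\rangle$, yields \eqref{pq} directly.

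The step I expect to be the main obstacle is the bookkeeping forced by the mean-field terms. Since $A_1(t)$ and $B_1^j(t)$ act on $\mathbf{E}z(t)$ and not on $z(t)$, the operator $\Theta(t)$ in \eqref{fd11} is affine through an expectation rather than a plain matrix, so its adjoint appearing inside $\Phi^\intercal(s,t)$ is not the transposed matrix but involves the operations $w\mapsto\mathbf{E}\{A_1^\intercal(t)w\}$ and $w\mapsto\mathbf{E}\{(B_1^j)^\intercal(t)w\}$. In the inductive step one must therefore keep track, for each factor in the product defining $\Phi(s,t)$, of whether it currently sits under a full expectation or under a conditional expectation given $\mathfrak{F}_t$, and apply the moment identities of (wii) only to the single factor carrying $w_h^j(t)$ --- which is conditionally centered with conditional second moment $h$ given $\mathfrak{F}_t$ --- while the $\Theta$-factors of index $\ge t+h$, being $\widehat{\mathfrak{F}}_t$-measurable and independent of $\mathfrak{F}_t$, are handled separately. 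Making this expansion line up exactly with the four groups of terms on the right-hand side of \eqref{dbsde} --- the $(I+A^\intercal(t))\mathbf{E}\{p(t+h)\mid\mathfrak{F}_t\}$ term, the $\mathbf{E}\{A_1^\intercal(t)p(t+h)\}$ term, and the two sums over $j$ --- is where the care lies; together with the integrability propagation of the first paragraph, these are the only delicate points, and everything else is routine.
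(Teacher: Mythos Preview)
The paper does not actually prove this lemma: it is stated with the citation \cite{mah1} and no argument is given in the text, so there is no ``paper's own proof'' to compare against. Your plan is the natural one and is essentially correct --- backward recursion for existence/uniqueness plus a finite-horizon $L^2$ propagation, then verification of \eqref{pq} via the one-step factorization $\Phi(s,t)=\Phi(s,t+h)\Theta(t)$ --- and you have correctly singled out the only subtle point, namely that $\Theta(t)$ in \eqref{fd11} is an affine operator (through the $\mathbf{E}\{\cdot\}$ terms) rather than a matrix, so $\Phi^{\intercal}$ must be read as the adjoint with respect to the pairing $\mathbf{E}\langle\cdot,\cdot\rangle$, which is precisely what makes the $\mathbf{E}\{A_1^{\intercal}(t)p(t+h)\}$ and $\mathbf{E}\{(B_1^j)^{\intercal}(t)q^j(t)\}$ terms appear. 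One small caveat: in \eqref{dbsde} the paper writes $(\sigma_x^j[t])^{\intercal}$ and $(\sigma_y^j[t])^{\intercal}$, which in this abstract setting should be read as $(B^j(t))^{\intercal}$ and $(B_1^j(t))^{\intercal}$ to match \eqref{l1}; your write-up should use the latter notation consistently.
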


\begin{remark}
	Assume that $\Phi^{\intercal}\left(
	t_{k},0\right)  ,$ $t_{k}\in \mathbb{T},$ is invertible we have
	\begin{align*}
		p\left(  t_{k}\right)&=-\mathbf{E}\left\{  \Phi^{\intercal}\left(
		t_{N+1},t\right)  h_{N+1}+\sum_{s=t}^{t_{N}}
		\Phi^{\intercal}\left(  s,t\right)  l\left(  s\right)  \mid\mathfrak{F}_{t_{k}}\right\} \\
		&  =-\left(  \Phi^{\intercal}\left(  t,0\right)  \right)  ^{-1}\mathbf{E}
		\left\{  \Phi^{\intercal}\left(  t_{N+1},0\right)  h_{N+1}+\sum_{s=t}^{t_{N}}
		\Phi^{\intercal}\left(  s,0\right)  l\left(  s\right)  \mid\mathfrak{F}
		_{t_{k}}\right\}.
	\end{align*}
\end{remark}

\section{Proof of Theorem \ref{thm:1st order}}\label{proof}

Assume that $\widehat{u}=\left\{  \widehat{u}\left(  t\right)  \right\}
_{t=t_{0}}^{t_{N}}$ is the optimal control of the problem (\ref{op1}) and
$\widehat{x}=\left\{  \widehat{x}\left(  t\right)  \right\}  _{t=t_{0}%
}^{t_{N+1}}$ is the corresponding optimal trajectory. We fix a time $t_{0}%
\leq\theta\leq t_{N}$ and choose $v\in L^{2}\left(  \Omega,\mathfrak{F}%
_{\theta},\mathbb{R}^{r}\right)  $ such that $u\left(  \theta\right)  +\Delta
v\in \mathcal{U}\left(  \theta\right)  $. For any $\varepsilon>0$, we define the
perturbed admissible control
\[
u^{\varepsilon}\left(  t\right)  =\widehat{u}\left(  t\right)  +\delta
_{t\theta}\varepsilon\Delta v,\ \ t\in \mathbb{T},
\]
where $\delta_{t\theta}=1$ for $t=\theta,$ $\delta_{t\theta}=0$ for
$t\neq\theta$. Convexity of $\mathcal{U}\left(  \theta\right)  $ implies that the
control $u^{\varepsilon}=\left\{  u^{\varepsilon}\left(  t\right)  \right\}
_{t=t_{0}}^{t_{N}}$ is admissible. Let $x^{\varepsilon}$ be a solution of
\eqref{op1} corresponding to the control $u^{\varepsilon}$.

We introduce the following short-hand notations for $f,\ \sigma^{j}$, and $l$:
\begin{align*}
	\widehat{\psi}\left[  t\right]   &  \coloneqq\psi\left(  t,\widehat{x}\left(  t\right)
	,\mathbf{E}\widehat{x}\left(  t\right)  ,\widehat{u}\left(  t\right)  \right)
	,\ \psi^{\varepsilon}\left[  t\right] \coloneqq\psi\left(  t,x^{\varepsilon}\left(
	t\right)  ,\mathbf{E}x^{\varepsilon}\left(  t\right)  ,u^{\varepsilon}\left(
	t\right)  \right);\\
	\widehat{\psi}^{\varepsilon}\left[  t\right]   &  \coloneqq\psi\left(  t,\widehat{x}\left(
	t\right)  ,\mathbf{E}\widehat{x}\left(  t\right)  ,u^{\varepsilon}\left(
	t\right)  \right);\\
	 \widehat{\psi}_{u}\left[  t;\varepsilon\right]  &\coloneqq\int
	_{0}^{1}\psi_{u}\left(  t,\widehat{x}\left(  t\right)  ,\mathbf{E}\widehat
	{x}\left(  t\right)  ,\widehat{u}\left(  t\right)  +\lambda\left(
	u^{\varepsilon}\left(  t\right)  -\widehat{u}\left(  t\right)  \right)
	\right)  d\lambda;\\
	\widehat{z}\left(  t;\varepsilon\right)   &  :=\widehat{z}\left(  t\right)
	+\lambda\left(  z^{\varepsilon}\left(  t\right)  -\widehat{z}\left(  t\right)
	\right)  ,\quad \text{where} \quad z=x,y,u;\\
	\widehat{\psi}_{z}\left[  t;\varepsilon\right]   &  :=\int_{0}^{1}\psi_{z}\left(
	t,\widehat{x}\left(  t\right)  +\lambda\left(  x^{\varepsilon}\left(
	t\right)  -\widehat{x}\left(  t\right)  \right)  ,\mathbf{E}\widehat{x}\left(
	t\right)  +\lambda\left(  \mathbf{E}x^{\varepsilon}\left(  t\right)
	-\mathbf{E}\widehat{x}\left(  t\right)  \right)  ,\widehat{u}\left(  t\right)
	\right)  d\lambda,
\end{align*}
where $z=x$ or $z=y:=\mathbf{E}x$ in the last definition.
\begin{lemma}\label{lem:1}
	Assume that assumption (A1) holds. Then we have for all $\varepsilon>0$ that
	\begin{equation}\label{eq1}
		\max_{0\leq k\leq N}\mathbf{E}\left\Vert x^{\varepsilon}\left(  t_{k}\right)
		-\widehat{x}\left(  t_{k}\right)  \right\Vert ^{2}\leq L\varepsilon
		^{2}\mathbf{E}\left\Vert \Delta v\right\Vert ^{2}.	
	\end{equation}		
\end{lemma}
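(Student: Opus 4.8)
The plan is to estimate the difference $x^{\varepsilon}(t_k)-\widehat{x}(t_k)$ recursively in $k$, using the dynamics \eqref{op1} to write a difference equation for $\delta x(t_k)\coloneqq x^{\varepsilon}(t_k)-\widehat{x}(t_k)$ and then applying a discrete Gronwall-type argument. Since the two controls $u^{\varepsilon}$ and $\widehat{u}$ differ only at the single time $\theta$ (by $\varepsilon\Delta v$), the perturbation enters the recursion only through that one index, and its size is controlled by the Lipschitz constant from (A1).

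First I would subtract the state equations for $x^{\varepsilon}$ and $\widehat{x}$ at a generic time $t$ to obtain
\begin{align*}
\delta x(t+h) &= \delta x(t) + h\bigl(f(t,x^{\varepsilon}(t),\mathbf{E}x^{\varepsilon}(t),u^{\varepsilon}(t)) - f(t,\widehat{x}(t),\mathbf{E}\widehat{x}(t),\widehat{u}(t))\bigr)\\
&\quad + \textstyle\sum_{j=1}^{d}\bigl(\sigma^{j}(t,x^{\varepsilon}(t),\mathbf{E}x^{\varepsilon}(t),u^{\varepsilon}(t)) - \sigma^{j}(t,\widehat{x}(t),\mathbf{E}\widehat{x}(t),\widehat{u}(t))\bigr)w_h^{j}(t).
\end{align*}
Then I would take $\mathbf{E}\|\cdot\|^2$ on both sides. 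Using property (wii) — namely that the $w_h^j(t)$ are conditionally mean-zero, conditionally uncorrelated across $j$, and have conditional second moment $h$, and are independent of $\mathfrak{F}_t$ — the martingale-difference structure kills the cross terms between $\delta x(t)+h(\cdots)$ and the stochastic sum, and turns the stochastic sum into $h\sum_j \mathbf{E}\|\sigma^{j}(\cdots)-\sigma^{j}(\cdots)\|^2$. The Lipschitz bound (A1) then controls each difference of $f$ or $\sigma^j$ by $L(\|\delta x(t)\| + \|\mathbf{E}\delta x(t)\| + \|\delta u(t)\|)$, and since $\|\mathbf{E}\delta x(t)\|^2 \le \mathbf{E}\|\delta x(t)\|^2$ by Jensen, I arrive at an inequality of the form
\begin{align*}
\mathbf{E}\|\delta x(t+h)\|^2 \le (1+Ch)\,\mathbf{E}\|\delta x(t)\|^2 + C\,\delta_{t\theta}\,\varepsilon^2\,\mathbf{E}\|\Delta v\|^2,
\end{align*}
where $C$ depends only on $L$, $d$, $h$, and $N$; I would absorb the constants carefully, possibly using a Cauchy--Schwarz / Young step $(a+b+c)^2 \le 3(a^2+b^2+c^2)$ when expanding the squared Lipschitz bound.

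Finally I would iterate this scalar recursion. Starting from $\delta x(t_0)=0$ (the initial condition $\xi$ is the same for both), the only inhomogeneous contribution comes from $t=\theta$, so after unrolling, $\max_{0\le k\le N}\mathbf{E}\|\delta x(t_k)\|^2 \le (1+Ch)^{N} C\,\varepsilon^2\,\mathbf{E}\|\Delta v\|^2$, and since $N$ and $h$ are fixed, $(1+Ch)^N C$ is just another constant, which I would rename $L$ (abusing notation as the paper does, letting $L$ denote a generic constant). The main obstacle I anticipate is purely bookkeeping: keeping the constant genuinely independent of $\varepsilon$ and $\Delta v$ while tracking how the single-index perturbation propagates, and being careful that the mean-field term $\mathbf{E}x(t)$ — which appears both as the second argument of the coefficients and as a separate equation in \eqref{op1} — does not introduce a circular dependence; this is resolved by the Jensen inequality $\|\mathbf{E}\delta x(t)\|^2\le\mathbf{E}\|\delta x(t)\|^2$, which lets everything be closed in terms of $\mathbf{E}\|\delta x(t)\|^2$ alone. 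No genuinely hard step is expected; the result is a standard a priori estimate for the linearized/perturbed mean-field difference equation.
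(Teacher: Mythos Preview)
Your proposal is correct and follows essentially the same approach as the paper: exploit that $u^{\varepsilon}$ and $\widehat{u}$ coincide except at the single time $\theta$, use the Lipschitz bound (A1) together with the martingale-difference properties (wii) of $w_h^j$ to control $\mathbf{E}\|\delta x(t+h)\|^2$ in terms of $\mathbf{E}\|\delta x(t)\|^2$ plus a one-time forcing of size $\varepsilon^2\mathbf{E}\|\Delta v\|^2$, and then iterate. The only cosmetic difference is that the paper carries out the induction by hand (treating $t=\theta$, $t=\theta+h$, and then ``by recursive iteration''), whereas you package the same computation as a discrete Gronwall inequality; the content is identical.
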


\begin{proof}
	For $t=t_{0},...,\theta-h,$ it is clear that $x^{\varepsilon}\left(  t+h\right)
	=\widehat{x}\left(  t+h\right)  ,\ \mathbf{E}x^{\varepsilon}\left(  t+h\right)
	=\mathbf{E}\widehat{x}\left(  t+h\right)$. By recursive iteration for $t=\theta,$ we have
	\begin{equation*}
		x^{\varepsilon}\left(  \theta+h\right)  -\widehat{x}\left(  \theta+h\right)
		=\widehat{f}^{\varepsilon}\left[  \theta\right]  -\widehat{f}\left[
		\theta\right]  +
		\sum_{j=1}^{d}\left(  \widehat{\sigma}^{\varepsilon j}\left[  \theta\right]  -\widehat{\sigma}^{j}\left[  \theta\right]  \right)  w_{h}^{j}\left(  \theta\right)  .
	\end{equation*}
	Then observe for all $\varepsilon>0$ that
	\begin{equation*}
		\mathbf{E}\left\Vert x^{\varepsilon}\left(  \theta+h\right)  -\widehat
		{x}\left(  \theta+h\right)  \right\Vert ^{2}\leq \mathbf{E}\left\Vert \widehat
		{f}^{\varepsilon}\left[  \theta\right]  -\widehat{f}\left[  \theta\right]
		\right\Vert ^{2}+\mathbf{E}\sum_{j=1}^{d}\left\Vert \widehat{\sigma}^{\varepsilon j}\left[  \theta\right]
		-\widehat{\sigma}^{j}\left[  \theta\right]  \right\Vert ^{2}.
	\end{equation*}
	By boundedness of $f_{u}$ and $\sigma_{u}^{j},$ and assumption (A1) we have that
	\begin{equation*}
		\mathbf{E}\left\Vert x^{\varepsilon}\left(  \theta+h\right)  -\widehat
		{x}\left(  \theta+h\right)  \right\Vert ^{2}\leq L\varepsilon^{2}\mathbf{E}\left\Vert \Delta v\right\Vert ^{2}.
	\end{equation*}
	For $t=\theta+h$ by boundedness of $f_{x}$ and $\sigma_{x}^{j}$ and assumption (A1), we have that
	\begin{align*}
		&\mathbf{E}\left\Vert x^{\varepsilon}\left(  \theta+2h\right)  -\widehat
		{x}\left(  \theta+2h\right)  \right\Vert ^{2} \\ &\leq\mathbf{E}\left\Vert
		x^{\varepsilon}\left(  \theta+h\right)  -\widehat{x}\left(  \theta+h\right)
		\right\Vert ^{2}\\
		&  +\mathbf{E}\left\Vert f^{\varepsilon}\left(  \theta+h,x^{\varepsilon}\left(
		\theta+h\right)  ,\mathbf{E}x^{\varepsilon}\left(  \theta+h\right)
		,\widehat{u}\left(  \theta+h\right)  \right)  -\widehat{f}\left[
		\theta+h\right]  \right\Vert ^{2}\\
		&  +\mathbf{E}\sum_{j=1}^{d}
		\left\Vert \sigma^{\varepsilon j}\left(  \theta+h,x^{\varepsilon}\left(  \theta+h\right)
		,\mathbf{E}x^{\varepsilon}\left(  \theta+h\right)  ,\widehat{u}\left(
		\theta+h\right)  \right)  -\widehat{\sigma}^{j}\left[  \theta+h\right]
		\right\Vert ^{2}\\
		&  \leq\mathbf{E}\left\Vert x^{\varepsilon}\left(  \theta+h\right)
		-\widehat{x}\left(  \theta+h\right)  \right\Vert ^{2}+2L\mathbf{E}\left\Vert
		x^{\varepsilon}\left(  \theta+h\right)  -\widehat{x}\left(  \theta+h\right)
		\right\Vert ^{2}\\
		&+2L\left\Vert \mathbf{E}x^{\varepsilon}\left(  \theta+h\right)  -\mathbf{E}
		\widehat{x}\left(  \theta+h\right)  \right\Vert ^{2}+2L\varepsilon
		^{2}\mathbf{E}\left\Vert \Delta v\right\Vert ^{2} \leq L\varepsilon
		^{2}\mathbf{E}\left\Vert \Delta v\right\Vert ^{2}.
	\end{align*}
	Similarly, by assumption (A1) we also have that 
	\begin{align*}
		&\left\Vert \mathbf{E}x^{\varepsilon}\left(  \theta+2h\right)  -\mathbf{E}
		\widehat{x}\left(  \theta+2h\right)  \right\Vert ^{2} \\
		&\leq\left\Vert
		\mathbf{E}x^{\varepsilon}\left(  \theta+h\right)  -\mathbf{E}\widehat
		{x}\left(  \theta+h\right)  \right\Vert ^{2}\\
		&  +\left\Vert \mathbf{E}f\left(  \theta+h,x^{\varepsilon}\left(
		\theta+h\right)  ,\mathbf{E}x^{\varepsilon}\left(  \theta+h\right)
		,\widehat{u}\left(  \theta+h\right)  \right)  -\mathbf{E}\widehat{f}\left[
		\theta+h\right]  \right\Vert ^{2}\\
		&\leq\left\Vert \mathbf{E}x^{\varepsilon}\left(  \theta+h\right)
		-\mathbf{E}\widehat{x}\left(  \theta+h\right)  \right\Vert ^{2}+L\mathbf{E}
		\left\Vert x^{\varepsilon}\left(  \theta+h\right)-\widehat{x}\left(
		\theta+h\right)\right\Vert ^{2}\\
		&+L\left\Vert \mathbf{E}x^{\varepsilon}\left(  \theta+h\right)  -\mathbf{E}\widehat{x}\left(  \theta+h\right)  \right\Vert ^{2}\leq L\varepsilon
		^{2}\mathbf{E}\left\Vert \Delta v\right\Vert ^{2}.
	\end{align*}
	Therefore, by recursive iteration for $t=\theta+2h, \ldots, t_{N}$, we obtain the desired result \eqref{eq1}.	The proof of  Lemma \ref{lem:1} is thus complete.	
\end{proof}

\subsection{Duality analysis}
Let $\xi=\left\{  \xi\left(  t\right)  \right\}_{t=t_{0}}^{t_{N}}$ be the
solution to the following difference equation,
\begin{align}\label{xi}
	\begin{cases}
		\xi\left(t+h\right)&  =\xi\left(  t\right)  +\widehat{f}_{x}\left[  t\right]
		\xi\left(  t\right)  +\widehat{f}_{y}\left[  t\right]  \textbf{E}\xi\left(  t\right)
		+\delta_{t\theta}\widehat{f}_{u}\left[  t\right]  \varepsilon\Delta v\\
		&+
		\sum_{j=1}^{d}	\left(  \widehat{\sigma}_{x}^{j}\left[  t\right]  \xi\left(  t\right)  +\widehat{\sigma}_{y}^{j}\left[  t\right]  \mathbf{E}\xi\left(  t\right)  +\delta_{t\theta}\widehat{\sigma}
		_{u}^{j}\left[  t\right]  \varepsilon\Delta v\right)  w_{h}^{j}\left(
		t\right)  ,\\
		\xi\left(  0\right)&=0.
	\end{cases}
\end{align}
By Lemma \ref{lem:1}, observe for all $\varepsilon>0$ that
\begin{equation*}
	\max_{t\in \mathbb{T}}\mathbf{E}\left\Vert \xi\left(  t\right)  \right\Vert ^{2}\leq
	L\varepsilon^{2}\mathbf{E}\left\Vert \Delta v\right\Vert ^{2}.
\end{equation*}

\begin{lemma}\label{lem:2}
	Assume that assumption (A1) holds. Then we have for all $\varepsilon>0$ that
	\[
	\max_{t\in \mathbb{T}}\mathbf{E}\left\Vert x^{\varepsilon}\left(  t\right)
	-\widehat{x}\left(  t\right)  -\xi\left(  t\right)  \right\Vert ^{2}=o\left(
	\varepsilon^{2}\right).
	\]
	
\end{lemma}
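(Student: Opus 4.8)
The plan is to show that the first-order remainder $\eta^{\varepsilon}(t):=x^{\varepsilon}(t)-\widehat{x}(t)-\xi(t)$ itself solves a linear mean-field difference equation of the type \eqref{l1}, with a forcing term that is $o(\varepsilon)$ in $L^{2}$, and then to read the required bound off the recursive $L^{2}$ estimate already used in the proof of Lemma~\ref{lem:1} (equivalently, off the representation \eqref{fd3}). First I would observe that, since the perturbation is a single spike at $t=\theta$, one has $x^{\varepsilon}(t)=\widehat{x}(t)$, $\xi(t)=0$, hence $\eta^{\varepsilon}(t)=0$, for all $t_{0}\le t\le\theta$, so only the recursion for $t\ge\theta$ has to be treated, starting from $\eta^{\varepsilon}(\theta)=0$. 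Subtracting \eqref{xi} from the difference of the two copies of \eqref{op1} corresponding to $u^{\varepsilon}$ and to $\widehat{u}$, and expanding each coefficient increment by the fundamental theorem of calculus --- i.e. writing, for $\psi=f,\sigma^{j}$,
\[
\psi^{\varepsilon}[t]-\widehat{\psi}[t]=\widehat{\psi}_{x}[t;\varepsilon]\bigl(x^{\varepsilon}(t)-\widehat{x}(t)\bigr)+\widehat{\psi}_{y}[t;\varepsilon]\bigl(\mathbf{E}x^{\varepsilon}(t)-\mathbf{E}\widehat{x}(t)\bigr)+\delta_{t\theta}\,\widehat{\psi}_{u}[t;\varepsilon]\,\varepsilon\Delta v
\]
with the integral-mean derivatives introduced above --- I would obtain
\[
\eta^{\varepsilon}(t+h)=\bigl(I+h\widehat{f}_{x}[t]\bigr)\eta^{\varepsilon}(t)+h\widehat{f}_{y}[t]\,\mathbf{E}\eta^{\varepsilon}(t)+h\rho^{\varepsilon}_{f}(t)+\sum_{j=1}^{d}\Bigl(\widehat{\sigma}^{j}_{x}[t]\,\eta^{\varepsilon}(t)+\widehat{\sigma}^{j}_{y}[t]\,\mathbf{E}\eta^{\varepsilon}(t)+\rho^{\varepsilon}_{\sigma^{j}}(t)\Bigr)w_{h}^{j}(t),\qquad \eta^{\varepsilon}(\theta)=0,
\]
where, for $\psi=f,\sigma^{j}$ and with $\widehat{\psi}_{x}[t],\widehat{\psi}_{y}[t],\widehat{\psi}_{u}[t]$ the plain derivatives along the optimal pair,
\[
\rho^{\varepsilon}_{\psi}(t):=\bigl(\widehat{\psi}_{x}[t;\varepsilon]-\widehat{\psi}_{x}[t]\bigr)\bigl(x^{\varepsilon}(t)-\widehat{x}(t)\bigr)+\bigl(\widehat{\psi}_{y}[t;\varepsilon]-\widehat{\psi}_{y}[t]\bigr)\bigl(\mathbf{E}x^{\varepsilon}(t)-\mathbf{E}\widehat{x}(t)\bigr)+\delta_{t\theta}\bigl(\widehat{\psi}_{u}[t;\varepsilon]-\widehat{\psi}_{u}[t]\bigr)\varepsilon\Delta v .
\]
Since $x^{\varepsilon}(t)$ and the integral-mean derivatives are $\mathfrak{F}_{t}$-measurable, this is genuinely of the form \eqref{l1} with zero initial value, bounded coefficients $h\widehat{f}_{x}[t],h\widehat{f}_{y}[t],\widehat{\sigma}^{j}_{x}[t],\widehat{\sigma}^{j}_{y}[t]$, forcing terms $\phi(t)=h\rho^{\varepsilon}_{f}(t)$, $\psi^{j}(t)=\rho^{\varepsilon}_{\sigma^{j}}(t)$, and $l\equiv0$.

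Second, running the same step-by-step $L^{2}$ estimate as in the proof of Lemma~\ref{lem:1} (using the mean-zero and moment properties (wi)--(wii) of the increments $w_{h}^{j}$ to kill the cross terms, plus a discrete Gronwall inequality over the finitely many time steps), or alternatively applying the representation \eqref{fd3} with the uniformly bounded fundamental matrix $\Phi$, I would arrive at
\[
\max_{t\in\mathbb{T}}\mathbf{E}\bigl\Vert\eta^{\varepsilon}(t)\bigr\Vert^{2}\le C\sum_{\tau=\theta}^{t_{N}}\Bigl(\mathbf{E}\bigl\Vert\rho^{\varepsilon}_{f}(\tau)\bigr\Vert^{2}+\sum_{j=1}^{d}\mathbf{E}\bigl\Vert\rho^{\varepsilon}_{\sigma^{j}}(\tau)\bigr\Vert^{2}\Bigr)
\]
for some $C=C(L,N,h,d)$, so everything reduces to proving that this sum is $o(\varepsilon^{2})$.

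Third --- and this is the main point --- I would first upgrade Lemma~\ref{lem:1} to a fourth-moment estimate $\max_{0\le k\le N}\mathbf{E}\Vert x^{\varepsilon}(t_{k})-\widehat{x}(t_{k})\Vert^{4}\le C\varepsilon^{4}\mathbf{E}\Vert\Delta v\Vert^{4}$, obtained by repeating that recursion with fourth powers and using $\mathbf{E}(w_{h}^{j}(t_{k}))^{4}<\infty$ from (wii). Then, for each $\tau\ge\theta+h$ (where the control term of $\rho^{\varepsilon}_{\psi}$ is absent), assumption (A2) gives the pointwise bound $\Vert\widehat{\psi}_{x}[\tau;\varepsilon]-\widehat{\psi}_{x}[\tau]\Vert\le\tfrac{L_{1}}{2}\bigl(\Vert x^{\varepsilon}(\tau)-\widehat{x}(\tau)\Vert+|\mathbf{E}x^{\varepsilon}(\tau)-\mathbf{E}\widehat{x}(\tau)|\bigr)$ and similarly for $\widehat{\psi}_{y}$; combined with $|\mathbf{E}x^{\varepsilon}(\tau)-\mathbf{E}\widehat{x}(\tau)|\le(\mathbf{E}\Vert x^{\varepsilon}(\tau)-\widehat{x}(\tau)\Vert^{2})^{1/2}=O(\varepsilon)$, the Cauchy--Schwarz inequality and the $L^{2}$ and $L^{4}$ bounds on $x^{\varepsilon}-\widehat{x}$ give $\mathbf{E}\Vert\rho^{\varepsilon}_{\psi}(\tau)\Vert^{2}=O(\varepsilon^{4})$. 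The only surviving index is $\tau=\theta$: there $x^{\varepsilon}(\theta)=\widehat{x}(\theta)$, so the first two terms of $\rho^{\varepsilon}_{\psi}(\theta)$ vanish and $\rho^{\varepsilon}_{\psi}(\theta)=\bigl(\widehat{\psi}_{u}[\theta;\varepsilon]-\widehat{\psi}_{u}[\theta]\bigr)\varepsilon\Delta v$ with $\widehat{\psi}_{u}[\theta;\varepsilon]-\widehat{\psi}_{u}[\theta]\to0$ a.s. as $\varepsilon\to0$ (continuity of $\psi_{u}$ in $u$) and uniformly bounded, hence $\mathbf{E}\Vert\rho^{\varepsilon}_{\psi}(\theta)\Vert^{2}=\varepsilon^{2}\mathbf{E}\bigl[\Vert\widehat{\psi}_{u}[\theta;\varepsilon]-\widehat{\psi}_{u}[\theta]\Vert^{2}\Vert\Delta v\Vert^{2}\bigr]=o(\varepsilon^{2})$ by dominated convergence. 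Summing over $\tau\in\{\theta,\dots,t_{N}\}$ and inserting into the estimate of the second step gives $\max_{t\in\mathbb{T}}\mathbf{E}\Vert\eta^{\varepsilon}(t)\Vert^{2}=o(\varepsilon^{2})$.

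The step I expect to be the real obstacle is precisely this last one: the naive Taylor bound only yields $\mathbf{E}\Vert\rho^{\varepsilon}_{\psi}(\tau)\Vert^{2}=O(\varepsilon^{2})$ --- a product of two quantities each of size $\varepsilon$ in $L^{2}$ --- which is not enough, and upgrading it to $o(\varepsilon^{2})$ is what forces the fourth-moment refinement of Lemma~\ref{lem:1} (a uniform-integrability argument based on $x^{\varepsilon}(\tau)\to\widehat{x}(\tau)$ and continuity of the derivatives would be an alternative). A smaller technical point to watch is the $\mathfrak{F}_{t}$-measurability of the integral-mean derivatives, which is needed so that $\eta^{\varepsilon}$ really does fall under the solvable equation \eqref{l1}; the rest is routine discrete-Gronwall bookkeeping.
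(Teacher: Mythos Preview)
Your strategy is essentially the paper's: derive a linear mean-field recursion for $\eta^{\varepsilon}=x^{\varepsilon}-\widehat{x}-\xi$ driven by a ``small'' forcing built from differences of integral-mean derivatives, and show that this forcing is $o(\varepsilon^{2})$ in $L^{2}$. The paper carries this out as a bare step-by-step induction on $t$ rather than through a packaged Gronwall bound, and it places the coefficient $\widehat{\psi}_{x}[t;\varepsilon]$ (instead of $\widehat{\psi}_{x}[t]$) on the $\eta^{\varepsilon}$-term, so that the forcing appears as $(\widehat{\psi}_{x}[t;\varepsilon]-\widehat{\psi}_{x}[t])\,\xi(t)$ rather than your $(\widehat{\psi}_{x}[t;\varepsilon]-\widehat{\psi}_{x}[t])(x^{\varepsilon}(t)-\widehat{x}(t))$. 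That choice buys something concrete: since $\xi$ is \emph{exactly} linear in $\varepsilon$, the quantity $\varepsilon^{-2}\|\xi(t)\|^{2}$ is a fixed $L^{1}$ random variable, so the dominated-convergence route (which you list as an alternative) applies immediately, with no higher-moment input needed.

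By contrast, your primary device --- the $L^{4}$ upgrade of Lemma~\ref{lem:1} --- yields $\mathbf{E}\|x^{\varepsilon}(t)-\widehat{x}(t)\|^{4}\le C\varepsilon^{4}\mathbf{E}\|\Delta v\|^{4}$, which is vacuous unless $\Delta v\in L^{4}$; the framework only places $\Delta v\in L^{2}(\Omega,\mathfrak{F}_{\theta},\mathbb{R}^{r})$, so in full generality you must fall back on the dominated-convergence alternative you already flagged, and the paper's decomposition makes that alternative particularly clean. Finally, note that both arguments rely on continuity of the first derivatives (hence on (A2), or at least the part giving $\psi_{x},\psi_{y},\psi_{u}$ continuous), even though the lemma is stated under (A1) alone; the paper's proof invokes this as well.
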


\begin{proof}
	For $t=t_{0},...,\theta,$ observe that $ x^{\varepsilon}\left(  t\right)
	-\widehat{x}\left(  t\right)  -\xi\left(  t\right)  =0$ and $\textbf{E}x^{\varepsilon
	}\left(  t\right)  -\mathbf{E}\widehat{x}\left(  t\right)  -\mathbf{E}
	\xi\left(  t\right)  =0$.\\
	By recursive iteration, for $t=\theta$, we have 
	\begin{align*}
		&  x^{\varepsilon}\left(  \theta+h\right)  -\widehat{x}\left(  \theta
		+h\right)  -\xi\left(  \theta+h\right) \\
		&  =\left(  \widehat{f}_{u}\left[  \theta;\varepsilon\right]  -\widehat{f}%
		_{u}\left[  \theta\right]  \right)  \varepsilon\Delta v+\left(  \widehat
		{f}_{v}\left[  \theta;\varepsilon\right]  -\widehat{f}_{v}\left[
		\theta\right]  \right)  \varepsilon\Delta\mathbf{E}v\\
		&  +\sum_{j=1}^{d}
		\left(  \widehat{\sigma}_{u}^{j}\left[  \theta;\varepsilon\right]
		-\widehat{\sigma}_{u}^{j}\left[  \theta\right]  \right)  \varepsilon\Delta
		vw_{h}^{j}\left(  \theta\right)  +\sum_{j=1}^{d}
		\left(  \widehat{\sigma}_{v}^{j}\left[  \theta;\varepsilon\right]
		-\widehat{\sigma}_{v}^{j}\left[  \theta\right]  \right)  \varepsilon
		\Delta\mathbf{E}vw_{h}^{j}\left(  \theta\right).
	\end{align*}
	Then we have
	\begin{gather*}
		\mathbf{E}\left\Vert x^{\varepsilon}\left(  \theta+h\right)  -\widehat
		{x}\left(  \theta+h\right)  -\xi\left(  \theta+h\right)  \right\Vert ^{2}\leq
		L\varepsilon^{2}\mathbf{E}\left\Vert \widehat{f}_{u}\left[  \theta
		;\varepsilon\right]  -\widehat{f}_{u}\left[  \theta\right]  \right\Vert
		^{2}\left\Vert \Delta v\right\Vert ^{2}\\
		+L\varepsilon^{2}\mathbf{E}%
		{\displaystyle\sum_{j=1}^{d}}
		\left\Vert \widehat{\sigma}_{u}^{j}\left[  \theta;\varepsilon\right]
		-\widehat{\sigma}_{u}^{j}\left[  \theta\right]  \right\Vert ^{2}\left\Vert
		\Delta v\right\Vert ^{2}.
	\end{gather*}
	Similarly, we have 
	\begin{align*}
		\left\Vert \mathbf{E}x^{\varepsilon}\left(  \theta+h\right)  -\mathbf{E}%
		\widehat{x}\left(  \theta+h\right)  -\mathbf{E}\xi\left(  \theta+h\right)
		\right\Vert ^{2}\leq L\varepsilon^{2}\left\Vert \mathbf{E}\widehat{f}%
		_{u}\left[  \theta;\varepsilon\right]  -\mathbf{E}\widehat{f}_{u}\left[
		\theta\right]  \right\Vert ^{2}\left\Vert \Delta v\right\Vert ^{2}.
	\end{align*}
	Since $\left\Vert \widehat{f}_{u}\left[  \theta;\varepsilon\right]
	-\widehat{f}_{u}\left[  \theta\right]  \right\Vert ^{2},\left\Vert
	\widehat{\sigma}_{u}^{j}\left[  \theta;\varepsilon\right]  -\widehat{\sigma
	}_{u}^{j}\left[  \theta\right]  \right\Vert ^{2}\rightarrow0$ as
	$\varepsilon\rightarrow0^{+},$ we get%
	\begin{align*}
		\lim_{\varepsilon\rightarrow0}\frac{1}{\varepsilon^{2}}\mathbf{E}\left\Vert
		x^{\varepsilon}\left(  \theta+h\right)  -\widehat{x}\left(  \theta+h\right)
		-\xi\left(  \theta+h\right)  \right\Vert ^{2}  &  =0,\\
		\lim_{\varepsilon\rightarrow0}\frac{1}{\varepsilon^{2}}\left\Vert
		\mathbf{E}x^{\varepsilon}\left(  \theta+h\right)  -\mathbf{E}\widehat
		{x}\left(  \theta+h\right)  -\textbf{E}\xi\left(  \theta+h\right)  \right\Vert ^{2}
		&  =0.
	\end{align*}
	For $t=\theta+2h,...,t_{N},$
	\begin{align*}
		&  x^{\varepsilon}\left(  t+h\right)  -\widehat{x}\left(  t+h\right)
		-\xi\left(  t+h\right) \\
		&  =\widehat{f}_{x}\left[  t;\varepsilon\right]  \left(  x^{\varepsilon
		}\left(  t\right)  -\widehat{x}\left(  t\right)  -\xi\left(  t\right)
		\right)  +\widehat{f}_{y}\left[  \theta;\varepsilon\right]  \left(
		\mathbf{E}x^{\varepsilon}\left(  t\right)  -\mathbf{E}\widehat{x}\left(
		t\right)  -\mathbf{E}\xi\left(  t\right)  \right) \\
		&  +\left(  \widehat{f}_{x}\left[  t;\varepsilon\right]  -\widehat{f}%
		_{x}\left[  t\right]  \right)  \xi\left(  t\right)  +\left(  \widehat{f}%
		_{y}\left[  t;\varepsilon\right]  -\widehat{f}_{y}\left[  t\right]  \right)
		\mathbf{E}\xi\left(  t\right) \\
		&  +\sum_{j=1}^{d}
		\widehat{\sigma}_{x}^{j}\left[  \theta;\varepsilon\right]  \left(
		x^{\varepsilon}\left(  t\right)  -\widehat{x}\left(  t\right)  -\xi\left(
		t\right)  \right)  w_{h}^{j}\left(  t\right)  +\sum_{j=1}^{d}
		\widehat{\sigma}_{y}^{j}\left[  \theta;\varepsilon\right]  \left(
		\mathbf{E}x^{\varepsilon}\left(  t\right)  -\mathbf{E}\widehat{x}\left(
		t\right)  -\mathbf{E}\xi\left(  t-h\right)  \right)  w_{h}^{j}\left(  t\right)
		\\
		&  +\sum_{j=1}^{d}
		\left(  \widehat{\sigma}_{x}^{j}\left[  t;\varepsilon\right]  -\widehat
		{\sigma}_{x}^{j}\left[  t\right]  \xi\left(  t\right)  w_{h}^{j}\left(
		t\right)  \right)  +\sum_{j=1}^{d}
		\left(  \widehat{\sigma}_{y}^{j}\left[  t;\varepsilon\right]  -\widehat
		{\sigma}_{y}^{j}\left[  t\right]  \mathbf{E}\xi\left(  t\right)  w_{h}^{j}\left(  t\right)  \right).
	\end{align*}
	Then we have
	\begin{align*}
		&  \mathbf{E}\left\Vert x^{\varepsilon}\left(  t+h\right)  -\widehat{x}\left(
		t+h\right)  -\xi\left(  t+h\right)  \right\Vert ^{2}\\
		&  \leq L\mathbf{E}\left\Vert x^{\varepsilon}\left(  t\right)  -\widehat
		{x}\left(  t\right)  -\xi\left(  t\right)  \right\Vert ^{2}+L\left\Vert
		\mathbf{E}x^{\varepsilon}\left(  t\right)  -\mathbf{E}\widehat{x}\left(
		t\right)  -\mathbf{E}\xi\left(  t\right)  \right\Vert ^{2}\\
		&  +\mathbf{E}\left(  \left\Vert \widehat{f}_{x}\left[  t;\varepsilon\right]
		-\widehat{f}_{x}\left[  t\right]  \right\Vert ^{2}+\sum_{j=1}^{d}
		\left\Vert \widehat{\sigma}_{x}^{j}\left[  t;\varepsilon\right]
		-\widehat{\sigma}_{x}^{j}\left[  t\right]  \right\Vert ^{2}\right)  \left\Vert
		\xi\left(  t\right)  \right\Vert ^{2}\\
		&  +\mathbf{E}\left(  \left\Vert \widehat{f}_{y}\left[  t;\varepsilon\right]
		-\widehat{f}_{y}\left[  t\right]  \right\Vert ^{2}+\sum_{j=1}^{d}
		\left\Vert \widehat{\sigma}_{y}^{j}\left[  t;\varepsilon\right]
		-\widehat{\sigma}_{y}^{j}\left[  t\right]  \right\Vert \right)  \left\Vert
		\mathbf{E}\xi\left(  t\right)  \right\Vert ^{2}.
	\end{align*}
	By assumption (A1), we obtain as $\varepsilon\rightarrow 0^{+}$ that
	\begin{align*}
		\left\Vert \widehat{f}_{x}\left[  t;\varepsilon\right]  -\widehat{f}%
		_{x}\left[  t\right]  \right\Vert ^{2}+\sum_{j=1}^{d}\left\Vert \widehat{\sigma}_{x}^{j}\left[  t;\varepsilon\right]
		-\widehat{\sigma}_{x}^{j}\left[  t\right]  \right\Vert ^{2}  &  \rightarrow
		0,\\
		\left\Vert \widehat{f}_{y}\left[  t;\varepsilon\right]  -\widehat{f}%
		_{y}\left[  t\right]  \right\Vert ^{2}+%
		\displaystyle\sum_{j=1}^{d}	\left\Vert \widehat{\sigma}_{y}^{j}\left[  t;\varepsilon\right]
		-\widehat{\sigma}_{y}^{j}\left[  t\right]  \right\Vert ^{2}  &  \rightarrow0,
	\end{align*}
	 Therefore, we get that
	\begin{gather*}
		0\leq\lim_{\varepsilon\rightarrow0}\frac{1}{\varepsilon^{2}}\mathbf{E}%
		\left\Vert x^{\varepsilon}\left(  t+h\right)  -\widehat{x}\left(  t+h\right)
		-\xi\left(  t+h\right)  \right\Vert ^{2}\\
		\leq\lim_{\varepsilon\rightarrow0}\frac{1}{\varepsilon^{2}}\mathbf{E}%
		\left\Vert x^{\varepsilon}\left(  t\right)  -\widehat{x}\left(  t\right)
		-\xi\left(  t\right)  \right\Vert ^{2}\\
		+\lim_{\varepsilon\rightarrow0}\frac{1}{\varepsilon^{2}}\mathbf{E}\left\Vert
		x^{\varepsilon}\left(  t-h\right)  -\widehat{x}\left(  t-h\right)  -\xi\left(
		t-h\right)  \right\Vert ^{2}=0.
	\end{gather*}
	Finally, the conclusion is obtained by induction.	
\end{proof}

\begin{lemma}\label{lem:3}
	We have the first-order increment of cost functional \eqref{cost} as follows:
	\begin{align}\label{deltacost}
		&  J\left(  u^{\varepsilon}\right)  -J\left(  \widehat{u}\right) \nonumber\\
		&  =\mathbf{E}\left\langle \varphi_{x}\left(  \widehat{x}\left(
		t_{N+1}\right)  ,\mathbf{E}\widehat{x}\left(  t_{N+1}\right)  \right)
		,\xi\left(  t_{N+1}\right)  \right\rangle +\mathbf{E}\left\langle \varphi
		_{y}\left(  \widehat{x}\left(  t_{N+1}\right)  ,\mathbf{E}%
		\widehat{x}\left(  t_{N+1}\right)  \right)  ,\mathbf{E}\xi\left(
		t_{N+1}\right)  \right\rangle\nonumber \\
		&  +\mathbf{E}\sum_{t=t_{0}}^{t_{N}}\Big(  \left\langle \widehat{l}_{x}\left[  t\right]  ,\xi\left(  t\right)
		\right\rangle +\textbf{E}\left\langle \widehat{l}_{y}\left[  t\right]  ,\mathbf{E}\xi\left(
		t\right)  \right\rangle +\left\langle \delta_{t\theta}\widehat{l}_{u}\left[  t\right]
		,\varepsilon\Delta v\right\rangle \Big)  +o\left(  \varepsilon\right),
	\end{align}	
	where $u^{\varepsilon}(\cdot)$ is the so-called a spike (or needle) variation of $\widehat{u}(\cdot)$, defined as follows:
	\begin{align*}
		u^{\varepsilon}(t)\coloneqq
		\begin{cases*}
			\widehat{u}(t), \quad\text{for}\quad t\neq \theta,\\
			v,\qquad \text{for} \quad t=\theta.
		\end{cases*}
	\end{align*}	
\end{lemma}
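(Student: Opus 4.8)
The plan is to expand $J(u^\varepsilon)-J(\widehat u)$ to first order in $\varepsilon$ by a Taylor expansion of the terminal and running costs about the optimal pair $(\widehat x,\widehat u)$, and then to replace the state displacement $x^\varepsilon(\cdot)-\widehat x(\cdot)$ by the solution $\xi(\cdot)$ of the variational equation \eqref{xi}. Write $J(u^\varepsilon)-J(\widehat u)=A_\varepsilon+B_\varepsilon$ with
\begin{align*}
A_\varepsilon&:=\mathbf{E}\big[\varphi(x^\varepsilon(t_{N+1}),\mathbf{E}x^\varepsilon(t_{N+1}))-\varphi(\widehat x(t_{N+1}),\mathbf{E}\widehat x(t_{N+1}))\big],\\
B_\varepsilon&:=\mathbf{E}\sum_{t=t_0}^{t_N}\big[l(t,x^\varepsilon(t),\mathbf{E}x^\varepsilon(t),u^\varepsilon(t))-\widehat l[t]\big],
\end{align*}
and treat $A_\varepsilon$ and $B_\varepsilon$ separately. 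Throughout, the two workhorses are Lemma \ref{lem:1} (so $\mathbf{E}\|x^\varepsilon(t)-\widehat x(t)\|^2=O(\varepsilon^2)$, hence $\mathbf{E}\|\xi(t)\|^2=O(\varepsilon^2)$) and Lemma \ref{lem:2} (so $\mathbf{E}\|x^\varepsilon(t)-\widehat x(t)-\xi(t)\|^2=o(\varepsilon^2)$); combined with the Cauchy--Schwarz inequality these bound every error term that arises.

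For $B_\varepsilon$, observe that $u^\varepsilon(t)=\widehat u(t)$ for $t\neq\theta$ and that the perturbation affects the trajectory only from time $\theta+h$ onward, so $x^\varepsilon(t)=\widehat x(t)$, $\mathbf{E}x^\varepsilon(t)=\mathbf{E}\widehat x(t)$ for $t\le\theta$, and in particular $\xi(\theta)=0$. The summand at $t=\theta$ is therefore a pure control increment, equal to $\langle\widehat l_u[\theta;\varepsilon],\varepsilon\Delta v\rangle$ by the fundamental theorem of calculus in the control variable, and since $\widehat l_u[\theta;\varepsilon]\to\widehat l_u[\theta]$ as $\varepsilon\to0^{+}$ (continuity and boundedness of $l_u$ plus dominated convergence), this equals $\langle\widehat l_u[\theta],\varepsilon\Delta v\rangle+o(\varepsilon)$. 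For each $t\neq\theta$ I would use the same identity in the $(x,y)$-variables, writing $l(t,x^\varepsilon(t),\mathbf{E}x^\varepsilon(t),\widehat u(t))-\widehat l[t]=\langle\widehat l_x[t;\varepsilon],x^\varepsilon(t)-\widehat x(t)\rangle+\langle\widehat l_y[t;\varepsilon],\mathbf{E}x^\varepsilon(t)-\mathbf{E}\widehat x(t)\rangle$, then splitting $\widehat l_x[t;\varepsilon]=\widehat l_x[t]+(\widehat l_x[t;\varepsilon]-\widehat l_x[t])$ and $x^\varepsilon(t)-\widehat x(t)=\xi(t)+(x^\varepsilon(t)-\widehat x(t)-\xi(t))$ (and similarly for the $y$-term). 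By assumption (A2) the increment $\widehat l_x[t;\varepsilon]-\widehat l_x[t]$ is bounded in $L^2$-norm by $\tfrac{L_1}{2}(\|x^\varepsilon(t)-\widehat x(t)\|+\|\mathbf{E}x^\varepsilon(t)-\mathbf{E}\widehat x(t)\|)=O(\varepsilon)$, while $\widehat l_x[t]$ is bounded in $L^2$; hence all cross products are $o(\varepsilon)$ and only $\langle\widehat l_x[t],\xi(t)\rangle$ and $\langle\widehat l_y[t],\mathbf{E}\xi(t)\rangle$ survive. Summing over the finitely many $t$ gives $B_\varepsilon=\mathbf{E}\sum_{t=t_0}^{t_N}\big(\langle\widehat l_x[t],\xi(t)\rangle+\mathbf{E}\langle\widehat l_y[t],\mathbf{E}\xi(t)\rangle+\langle\delta_{t\theta}\widehat l_u[t],\varepsilon\Delta v\rangle\big)+o(\varepsilon)$.

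For $A_\varepsilon$, expand $\varphi$ at the point $\widehat x(t_{N+1})$ to get $A_\varepsilon=\mathbf{E}\langle\varphi_x,x^\varepsilon(t_{N+1})-\widehat x(t_{N+1})\rangle+\mathbf{E}\langle\varphi_y,\mathbf{E}x^\varepsilon(t_{N+1})-\mathbf{E}\widehat x(t_{N+1})\rangle+\mathbf{E}R_\varepsilon$, and replace $x^\varepsilon(t_{N+1})-\widehat x(t_{N+1})$ by $\xi(t_{N+1})$ at cost $o(\varepsilon)$ as before. The step I expect to be the main obstacle is controlling the remainder $\mathbf{E}R_\varepsilon$: because $\varphi$ is assumed only differentiable \emph{at} $\widehat x(t_{N+1})$ and Lipschitz in a neighbourhood, one has merely $|R_\varepsilon|\le\rho(\|\eta_\varepsilon\|)\,\|\eta_\varepsilon\|$ pointwise, where $\eta_\varepsilon$ denotes the joint displacement, $\rho$ is bounded (by the local Lipschitz constant of $\varphi$ together with $\|\varphi_x\|+\|\varphi_y\|$) and $\rho(r)\to0$ as $r\to0^{+}$; Cauchy--Schwarz then gives $\mathbf{E}|R_\varepsilon|\le\big(\mathbf{E}\rho(\|\eta_\varepsilon\|)^2\big)^{1/2}\big(\mathbf{E}\|\eta_\varepsilon\|^2\big)^{1/2}$, where the second factor is $O(\varepsilon)$ by Lemma \ref{lem:1} and the first tends to $0$ by dominated convergence since $\|\eta_\varepsilon\|\to0$ in probability. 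Thus $A_\varepsilon=\mathbf{E}\langle\varphi_x,\xi(t_{N+1})\rangle+\mathbf{E}\langle\varphi_y,\mathbf{E}\xi(t_{N+1})\rangle+o(\varepsilon)$. Adding $A_\varepsilon$ and $B_\varepsilon$ and recalling $\xi(\theta)=0$ yields exactly \eqref{deltacost}; what remains is the routine bookkeeping of the finitely many $o(\varepsilon)$ contributions, which is harmless because $N$ is fixed.
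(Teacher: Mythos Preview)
Your proposal is correct and follows essentially the same route as the paper: split the cost increment into terminal and running parts, apply a first-order Taylor expansion with integral remainders, and invoke Lemmas \ref{lem:1} and \ref{lem:2} to replace $x^\varepsilon-\widehat x$ by $\xi$ and absorb the cross terms into $o(\varepsilon)$. If anything, your treatment is more careful than the paper's, which simply writes out the expansion and declares the leftover terms to be $o(\varepsilon)$; in particular your argument for $\mathbf{E}R_\varepsilon=o(\varepsilon)$ via the modulus $\rho$ and dominated convergence fills a step the paper leaves implicit.
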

\begin{proof}
	The proof is trivial and based on Lemmas \ref{lem:1} and \ref{lem:2}. By the first-order
	Taylor expansion, we get
	\begin{align*}
		&J\left(  u^{\varepsilon}\right)  -J\left(  \widehat{u}\right) \\
		  &=\mathbf{E}\left(  \varphi\left(  x^{\varepsilon}\left(  t_{N+1}\right)
		,\mathbf{E}x^{\varepsilon}\left(  t_{N+1}\right)  \right)  -\varphi\left(
		\widehat{x}\left(  t_{N+1}\right)  ,\mathbf{E}\widehat{x}\left(
		t_{N+1}\right)  \right)  \right) \\ &+\mathbf{E}\sum_{t=t_{0}}^{t_{N}}
		\left(  l^{\varepsilon}\left[  t\right]  -\widehat{l}\left[  t\right]  \right)
		\\
		&  =\mathbf{E}\left\langle \varphi_{x}\left(  \widehat{x}\left(
		t_{N+1}\right)  ,\mathbf{E}\widehat{x}\left(  t_{N+1}\right)  \right)  ,x^{\varepsilon}\left(
		t_{N+1}\right)  -\widehat{x}\left(  t_{N+1}\right)  \right\rangle\\ &+\left\langle\mathbf{E}\varphi_{y}\left(  \widehat{x}\left(  t_{N+1}\right)
		,\mathbf{E}\widehat{x}\left(  t_{N+1}\right)  \right)  ,\textbf{E}x^{\varepsilon}\left(
		t_{N+1}\right)  -\textbf{E}\widehat{x}\left(  t_{N+1}\right)  \right\rangle \\
		&  +\mathbf{E}\int_{0}^{1}\left\langle \varphi_{x}\left(\widehat{x}(t_{N+1})+\lambda(x^{\varepsilon}(t_{N+1})-\widehat{x}(t_{N+1}))  ,\mathbf{E}\widehat{x}\left(
		t_{N+1}\right)  \right) ,x^{\varepsilon}\left(  t_{N+1}\right)  -\widehat{x}\left(
		t_{N+1}\right)  \right\rangle d\lambda\\
		&+\mathbf{E}\int_{0}^{1}\left\langle \textbf{E}\varphi_{y}(\widehat{x}(t_{N+1}),\textbf{E}\widehat{x}(t_{N+1})+\lambda(\textbf{E}x^{\varepsilon}(t_{N+1})-\textbf{E}\widehat{x}(t_{N+1})) ) , \textbf{E}x^{\varepsilon}\left(  t_{N+1}\right)  -\textbf{E}\widehat{x}\left(
		t_{N+1}\right)  \right\rangle d\lambda\\
		&  +\mathbf{E}\sum_{t=t_{0}}^{t_{N}}\Big(\left\langle \widehat{l}_{x}\left[  t\right] ,x^{\varepsilon}\left(  t\right)  -\widehat{x}\left(
		t\right)\right\rangle+\mathbf{E}\left\langle \widehat{l}_{y}\left[  t\right],\textbf{E}x^{\varepsilon}\left(  t\right)  -\textbf{E}\widehat{x}\left(
		t\right)  \right\rangle\Big)
		+\mathbf{E}\sum_{t=t_{0}}^{t_{N}}
		\left\langle \widehat{l}_{u}\left[  t\right]   ,\delta_{t\theta}\varepsilon \Delta v\right\rangle\\
		& +\mathbf{E}\sum_{t=t_{0}}^{t_{N}}
		\left\langle \widehat{l}_{x}\left[  t;\varepsilon\right]  +\mathbf{E}
		\widehat{l}_{y}\left[  t;\varepsilon\right]  ,x^{\varepsilon}\left(  t\right)
		-\widehat{x}\left(  t\right)  \right\rangle +\mathbf{E}\sum_{t=t_{0}}^{t_{N}}
		\left\langle\widehat{l}_{u}\left[  t;\varepsilon\right],\delta_{t\theta}\varepsilon \Delta v\right\rangle.
	\end{align*}
	Therefore, we get the desired result \eqref{deltacost} with $o(\varepsilon)$ which is defined as follows:
	\begin{align*}
		o(\varepsilon)&\coloneqq  \mathbf{E}\int_{0}^{1}\left\langle \varphi_{x}\left(\widehat{x}(t_{N+1})+\lambda(x^{\varepsilon}(t_{N+1})-\widehat{x}(t_{N+1}))  ,\mathbf{E}\widehat{x}\left(
		t_{N+1}\right)  \right) ,x^{\varepsilon}\left(  t_{N+1}\right)  -\widehat{x}\left(
		t_{N+1}\right)  \right\rangle d\lambda\\
		&+\mathbf{E}\int_{0}^{1}\left\langle \textbf{E}\varphi_{y}(\widehat{x}(t_{N+1}),\textbf{E}\widehat{x}(t_{N+1})+\lambda(\textbf{E}x^{\varepsilon}(t_{N+1})-\textbf{E}\widehat{x}(t_{N+1})) ) , \textbf{E}x^{\varepsilon}\left(  t_{N+1}\right)  -\textbf{E}\widehat{x}\left(
		t_{N+1}\right)  \right\rangle d\lambda\\
		& +\mathbf{E}\sum_{t=t_{0}}^{t_{N}}
		\left\langle \widehat{l}_{x}\left[  t;\varepsilon\right]  +\mathbf{E}
		\widehat{l}_{y}\left[  t;\varepsilon\right]  ,x^{\varepsilon}\left(  t\right)
		-\widehat{x}\left(  t\right)  \right\rangle +\mathbf{E}\sum_{t=t_{0}}^{t_{N}}
		\left\langle\widehat{l}_{u}\left[  t;\varepsilon\right],\delta_{t\theta}\varepsilon \Delta v\right\rangle.
	\end{align*}
	Hence, our conclusion follows.
\end{proof}
Now we are in a position to complete the proof of first-order necessary conditions as stated in Theorem \ref{thm:1st order}. As in the deterministic case the proof is based on the following identity:
\begin{equation}\label{identity}
	\mathbf{E}\left\langle p\left(  t_{N+1}\right)  ,\xi\left(  t_{N+1}\right)
	\right\rangle =\sum_{t=t_{0}}^{t_{N}}\mathbf{E}\left\langle \Delta p\left(
	t\right)  ,\xi\left(  t\right)  \right\rangle +\sum_{t=t_{0}}^{t_{N}
	}\mathbf{E}\left\langle p\left(  t+h\right)  , \Delta\xi\left(  t\right)
	\right\rangle .	
\end{equation}
Observe from \eqref{bse} and \eqref{xi}, respectively that 

\begin{align*}
	\Delta p(t)&=-h\Big(\mathbf{E}\left\{  \widehat{f}_{x}^{\intercal}\left[  t\right]
	p\left(  t+h\right)  \mid\mathfrak{F}_{t}\right\} +\mathbf{E}\left\{
	\widehat{f}_{y}^{\intercal}\left[  t\right]  p\left(  t+h\right)  \right\}\Big)\\
	&+\textbf{E}\widehat{l}_{x}[t]+\textbf{E}\left\{\widehat{l}_{y}[t]\right\}-\mathbf{E}\sum_{j=1}^{d}\Big(\left(  \widehat{\sigma}_{x}^{j}\left[  t\right] \right)  ^{\intercal
	}q^{j}\left(  t\right)  +\mathbf{E}\left\{  \left(  \widehat{\sigma}_{y}^{j}\left[
	t\right]  \right)  ^{\intercal}q^{j}\left(  t\right)  \right\}\Big) 
\end{align*}
and 
\begin{align*}
	\Delta \xi(t)&=h \Big(\widehat{f}_{x}\left[  t\right]
	\xi\left(  t\right)  +\widehat{f}_{y}\left[  t\right]  \mathbf{E}\xi\left(  t\right)
	+\delta_{t\theta}\widehat{f}_{u}\left[  t\right]  \varepsilon\Delta v\Big)\\
	&+\sum_{j=1}^{d}	\mathbf{E} \Big( \widehat{\sigma}_{x}^{j}\left[  t\right]  \xi\left(  t\right)  +\widehat{\sigma}_{y}
	^{j}\left[  t\right]  \mathbf{E}\xi\left(  t\right)  +\delta_{t\theta}
	\widehat{\sigma}_{u}^{j}\left[  t\right]  \varepsilon v\Big)(w_{h}^{j}(t))^{\intercal}  
\end{align*}
Then we have that
\begin{align}\label{rhs1}
	\mathbf{E}\left\langle \Delta p\left(
	t\right)  ,\xi\left(  t\right)  \right\rangle&=-h\textbf{E}\left\langle \mathbf{E}\left\{  \widehat{f}_{x}^{\intercal}\left[  t\right]
	p\left(  t+h\right)  \mid\mathfrak{F}_{t}\right\} +\mathbf{E}\left\{
	\widehat{f}_{y}^{\intercal}\left[  t\right]  p\left(  t+h\right)  \right\}  ,\xi\left(
	t\right)  \right\rangle\nonumber\\
	& +\textbf{E}\langle \widehat{l}_{x}[t]+\textbf{E}\left\lbrace \widehat{l}_{y}[t]\right\rbrace ,\xi(t)\rangle\\
	& -\mathbf{E}\sum_{j=1}^{d}
	\left\langle \left(  \widehat{\sigma}_{x}^{j}\left[  t\right]  \right)  ^{\intercal
	}q^{j}\left(  t\right)  +\mathbf{E}\left\{  \left(  \widehat{\sigma}_{y}^{j}\left[
	t\right]  \right)  ^{\intercal}q^{j}\left(  t\right)  \right\}  ,\xi\left(
	t\right)  \right\rangle. \nonumber
\end{align}
Also, we obtain that 
\begin{align}\label{rhs2}
	\mathbf{E}\left\langle p\left(  t+h\right)  , \Delta\xi\left(  t\right)
	\right\rangle&=h\mathbf{E}\left\langle p\left(  t+h\right)  ,\widehat{f}_{x}\left[  t\right]
	\xi\left(  t\right)  +\widehat{f}_{y}\left[  t\right]  \mathbf{E}\xi\left(  t\right)
	+\delta_{t\theta}\widehat{f}_{u}\left[  t\right]  \varepsilon\Delta v\right\rangle \\
	&+
	\sum_{j=1}^{d}	\mathbf{E}\left\langle p\left(  t+h\right)  w_{h}^{j}\left(  t\right)
	,\left(  \widehat{\sigma}_{x}^{j}\left[  t\right]  \xi\left(  t\right)  +\widehat{\sigma}_{y}
	^{j}\left[  t\right]  \mathbf{E}\xi\left(  t\right)  +\delta_{t\theta}
	\widehat{\sigma}_{u}^{j}\left[  t\right]  \varepsilon v\right)  \right\rangle.\nonumber
\end{align}
Substituting \eqref{rhs1} and \eqref{rhs2} into \eqref{identity} yields that

\begin{align*}
	&\mathbf{E}\left\langle p\left(  t_{N+1}\right)  ,\xi\left(  t_{N+1}\right)
	\right\rangle\\	&=\sum_{t=t_{0}}^{t_{N}}\textbf{E}\langle \widehat{l}_{x}[t]+\textbf{E}\left\lbrace \widehat{l}_{y}[t]\right\rbrace ,\xi(t)\rangle\\
	&+\sum_{t=t_{0}}^{t_{N}}\Big(h\mathbf{E} \left\langle
	\delta_{t\theta}p\left(  t+h\right)  ,\widehat{f}_{u}\left[  t\right]  \varepsilon\Delta v\right\rangle
	+\sum_{j=1}^{d}
	\mathbf{E}\left\langle \delta_{t\theta} p\left(  t+h\right)  w_{h}^{j}\left(  t\right)
	,\widehat{\sigma}_{u}^{j}\left[  t\right]  \varepsilon\Delta v\right\rangle\Big).
\end{align*}
So, we obtain for all positive integer $N$ that that
\begin{align*}
	\mathbf{E}\left\langle p\left(  t_{N+1}\right)  ,\xi\left(  t_{N+1}\right)
	\right\rangle&=\sum_{t=t_{0}}^{t_{N}}\textbf{E}\langle \widehat{l}_{x}[t]+\textbf{E}\left\lbrace \widehat{l}_{y}[t]\right\rbrace ,\xi(t)\rangle\\
	&+\sum_{t=t_{0}}^{t_{N}}\varepsilon\mathbf{E}\left\langle h\widehat{f}_{u}^{\intercal}\left[  \theta\right]
	\mathbf{E}\left\{  p\left(  \theta+h\right)  \mid\mathfrak{F}_{\theta
	}\right\}  +\sum_{j=1}^{d}\left(  \widehat{\sigma}_{u}^{j}\left[  \theta\right]  \right)  ^{\intercal
	}q^{j}\left(  \theta\right)  ,\Delta v\right\rangle.
\end{align*}
Therefore, by taking into account of the expression of $p(t_{N+1})$ in \eqref{bse}, we have
\begin{align*}
	&  \mathbf{E}\left\langle \varphi_{x}\left(  \widehat{x}\left(  t_{N+1}\right)  ,\mathbf{E}\widehat{x}\left(  t_{N+1}\right)  \right)  ,\xi\left(
	t_{N+1}\right)  \right\rangle +\mathbf{E}\left\langle \varphi_{y}\left(
	\widehat{x}\left(  t_{N+1}\right)  ,\mathbf{E}\widehat{x}\left(
	t_{N+1}\right)  \right)  ,\mathbf{E}\xi\left(  t_{N+1}\right)  \right\rangle
	\\
	&  =-\sum_{t=t_{0}}^{t_{N}}\Big(\textbf{E}\langle \widehat{l}_{x}[t]+\textbf{E}\left\lbrace \widehat{l}_{y}[t]\right\rbrace ,\xi(t)\rangle-\varepsilon \mathbf{E}\left\langle h\widehat{f}_{u}^{\intercal}\left[
	\theta\right]  p\left(  \theta+h\right)  +\left( \widehat{\sigma}_{u}^{j}\left[
	\theta\right]  \right)  ^{\intercal}q^{j}\left(  \theta\right),\Delta v\right\rangle\Big).
\end{align*}
Thus, according to Lemma \ref{lem:3}, it follows from the fact that 
\begin{equation}\label{vineq}
	0\leq J\left(  u^{\varepsilon}\right)  -J\left(  \widehat{u}\right)  =-\varepsilon
	\mathbf{E}\left\langle H_{u}\left(  \theta,\widehat{u}\left(  \theta\right)
	\right)  ,\Delta v\right\rangle +o\left(  \varepsilon\right),\quad \theta \in \mathbb{T},
\end{equation}
where 
\begin{align*}
	H_{u}(\theta,\widehat{u}(\theta))=\varepsilon \left\langle h\widehat{f}_{u}^{\intercal}\left[
	\theta\right]  p\left(  \theta+h\right)  +\left(  \widehat{\sigma}_{u}^{j}\left[
	\theta\right]  \right)  ^{\intercal}q^{j}\left(  \theta\right),\Delta v\right\rangle-\varepsilon\left\langle \widehat{l}_{u}[\theta],\Delta v \right\rangle.
\end{align*}
Finally, we deduce the variational inequality \eqref{max1} from \eqref{vineq}. The proof is complete.

	\section{Sufficient conditions for optimality} \label{sec:5}

In this section, we will show that the general controlled discrete-time stochastic systems formulated earlier, the maximum condition in terms of $H$ function plus some convexity conditions constitute sufficient conditions for optimality.

We first introduce additional assumptions:

\begin{description}
	\item[(A5.1)] The function $\varphi$ is convex in $(x,y)$ and functions $f,\sigma,l$ are convex;
	\item[(A5.2)] The Hamiltonian $H$ is convex in $(x,y,v)$;
	\item[(A5.3)] The functions $f_{y},\sigma_{y},\varphi_{y}, l_{y}$ are non-negative.
\end{description}
\begin{theorem}
	Assume the conditions (A5.1)–(A5.3) are satisfied and let $\widehat{u}\in \mathfrak{U}_{ad}$ with state trajectory $\widehat{x}$ be given and such that there exist solutions $(p(\cdot),q^{j}(\cdot)), j=1,\ldots,d$ to the adjoint equation. Then, if 
	\begin{equation}\label{sufH}
		H(t,\widehat{x},\widehat{u}, p,q)=\inf_{v\in \mathcal{U}(t)}H(t,\widehat{x},v,p,q),
	\end{equation}
	then for all $t\in \mathbb{T}$, $\mathbb{P}$-a.s., $\widehat{u}$ is an optimal control.
\end{theorem}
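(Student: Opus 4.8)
The plan is to prove directly that $\mathfrak{J}(u)\ge\mathfrak{J}(\widehat{u})$ for every admissible control $u\in\mathfrak{U}_{ad}$ with associated state trajectory $x$, by running the classical convex‑duality argument adapted to the discrete‑time mean‑field setting. Throughout, extend the Hamiltonian to a function of the state, the mean‑field variable and the control by $H(t,x,y,v):=h\langle\mathbf{E}\{p(t+h)\mid\mathfrak{F}_t\},f(t,x,y,v)\rangle+\sum_{j=1}^d\langle q^j(t),\sigma^j(t,x,y,v)\rangle-l(t,x,y,v)$ (the adjoint variables frozen along $(\widehat{x},\widehat{u})$), and write $\widehat{H}[t]:=H(t,\widehat{x}(t),\mathbf{E}\widehat{x}(t),\widehat{u}(t))$. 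Put $X(t):=x(t)-\widehat{x}(t)$, so that $X(t_0)=0$, and decompose
\[
\mathfrak{J}(u)-\mathfrak{J}(\widehat{u})=\underbrace{\mathbf{E}\big[\varphi(x(t_{N+1}),\mathbf{E}x(t_{N+1}))-\varphi(\widehat{x}(t_{N+1}),\mathbf{E}\widehat{x}(t_{N+1}))\big]}_{=:\,\mathrm{I}}+\underbrace{\mathbf{E}\sum_{t=t_0}^{t_N}\big(l(t,x(t),\mathbf{E}x(t),u(t))-\widehat{l}[t]\big)}_{=:\,\mathrm{II}}.
\]

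First I would bound the terminal piece $\mathrm{I}$ from below: convexity of $\varphi$ in $(x,y)$ from (A5.1) gives $\mathrm{I}\ge\mathbf{E}\langle\varphi_x(\widehat{x}(t_{N+1}),\mathbf{E}\widehat{x}(t_{N+1})),X(t_{N+1})\rangle+\mathbf{E}\langle\varphi_y(\widehat{x}(t_{N+1}),\mathbf{E}\widehat{x}(t_{N+1})),\mathbf{E}X(t_{N+1})\rangle$; since $\mathbf{E}\varphi_y$ is deterministic the second term equals $\mathbf{E}\langle\mathbf{E}\varphi_y(\widehat{\cdot}),X(t_{N+1})\rangle$, and the terminal condition $p(t_{N+1})=-\varphi_x-\mathbf{E}\varphi_y$ of \eqref{bse} then yields $\mathrm{I}\ge-\mathbf{E}\langle p(t_{N+1}),X(t_{N+1})\rangle$. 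Hypothesis (A5.3) (non‑negativity of the $y$‑derivatives) is what will eventually make the mean‑field contributions appearing below fall on the side compatible with this inequality.

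Next I would expand $\mathbf{E}\langle p(t_{N+1}),X(t_{N+1})\rangle$ through the discrete duality identity \eqref{identity} with $\xi$ replaced by $X$ (legitimate since $X(t_0)=0$), namely $\mathbf{E}\langle p(t_{N+1}),X(t_{N+1})\rangle=\sum_{t}\mathbf{E}\langle\Delta p(t),X(t)\rangle+\sum_{t}\mathbf{E}\langle p(t+h),\Delta X(t)\rangle$, and substitute the adjoint recursion \eqref{bse} for $\Delta p(t)$ and the state recursion \eqref{op1} for $\Delta X(t)=h\big(f(t,x(t),\mathbf{E}x(t),u(t))-\widehat{f}[t]\big)+\sum_{j=1}^d\big(\sigma^j(t,x(t),\mathbf{E}x(t),u(t))-\widehat{\sigma}^j[t]\big)w_h^j(t)$. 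Using that $X(t)$, $f(t,x(t),\mathbf{E}x(t),u(t))$ and $\sigma^j(t,x(t),\mathbf{E}x(t),u(t))$ are $\mathfrak{F}_t$‑measurable, the tower property, and $q^j(t)=\mathbf{E}\{p(t+h)w_h^j(t)\mid\mathfrak{F}_t\}$, every factor $p(t+h)$ tested against an $\mathfrak{F}_t$‑quantity collapses to $\mathbf{E}\{p(t+h)\mid\mathfrak{F}_t\}$ and every $p(t+h)w_h^j(t)$ to $q^j(t)$; after collecting terms and reading off the definition of $H$, the running‑cost term $\mathrm{II}$ is cancelled by the $l$‑contribution coming from $\langle p(t+h),\Delta X(t)\rangle$ and one is left with an identity of the form
\[
-\mathbf{E}\langle p(t_{N+1}),X(t_{N+1})\rangle+\mathrm{II}=\mathbf{E}\sum_{t=t_0}^{t_N}\Big(\widehat{H}[t]-H(t,x(t),\mathbf{E}x(t),u(t))+\langle\widehat{H}_x[t],X(t)\rangle+\langle\widehat{H}_y[t],\mathbf{E}X(t)\rangle\Big),
\]
with $\widehat{H}_x[t]$, $\widehat{H}_y[t]$ the partial gradients of $H$ in its state and mean‑field arguments along $(\widehat{x},\widehat{u})$.

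Combining the last two displays, $\mathfrak{J}(u)-\mathfrak{J}(\widehat{u})\ge\mathbf{E}\sum_{t}\big(\widehat{H}[t]-H(t,x(t),\mathbf{E}x(t),u(t))+\langle\widehat{H}_x[t],X(t)\rangle+\langle\widehat{H}_y[t],\mathbf{E}X(t)\rangle\big)$. By the convexity of $H$ in $(x,y,v)$ from (A5.2) the supporting‑hyperplane inequality of $H$ at $(\widehat{x}(t),\mathbf{E}\widehat{x}(t),\widehat{u}(t))$ bounds each summand below by $-\langle\widehat{H}_u[t],u(t)-\widehat{u}(t)\rangle$, so $\mathfrak{J}(u)-\mathfrak{J}(\widehat{u})\ge-\mathbf{E}\sum_{t}\langle\widehat{H}_u[t],u(t)-\widehat{u}(t)\rangle$; since $\widehat{u}(t)$ is an extremum of $v\mapsto H(t,\widehat{x}(t),v,p,q)$ over the convex set $\mathcal{U}(t)$ by \eqref{sufH}, the first‑order optimality condition on $\mathcal{U}(t)$ forces $\langle\widehat{H}_u[t],u(t)-\widehat{u}(t)\rangle\le0$, $\mathbb{P}$‑a.s., for every admissible $u$, and therefore $\mathfrak{J}(u)\ge\mathfrak{J}(\widehat{u})$, i.e.\ $\widehat{u}$ is optimal. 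The step I expect to be the main obstacle is the middle one: unlike in the non‑mean‑field case one must carefully separate the $\mathfrak{F}_t$‑measurable quantities from the deterministic (full‑expectation) ones produced by \eqref{bse}, and verify that the scattered $\mathbf{E}\{\cdot\mid\mathfrak{F}_t\}$‑, $q^j(t)$‑ and $\mathbf{E}\{\cdot\}$‑terms recombine exactly into $H$, $\widehat{H}_x$ and $\widehat{H}_y$; it is precisely here that (A5.3) is invoked to keep the mean‑field terms on the correct side of the estimates.
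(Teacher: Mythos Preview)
Your proposal follows essentially the same route as the paper: both use convexity of $\varphi$ to bound the terminal cost by $\mathbf{E}\langle p(t_{N+1}),\cdot\rangle$, expand this via the discrete duality identity \eqref{identity} with the adjoint and state recursions substituted in, regroup into the Hamiltonian and its $(x,y)$--gradients, and then close with convexity of $H$ plus the first--order optimality condition coming from \eqref{sufH}. The only cosmetic difference is that you work with $\mathfrak{J}(u)-\mathfrak{J}(\widehat{u})\ge0$ and $X=x-\widehat{x}$, whereas the paper computes $\mathfrak{J}(\widehat{u})-\mathfrak{J}(u)\le0$ along $\widehat{x}-x$; your remark about (A5.3) and the careful bookkeeping of the $\mathbf{E}\{\cdot\mid\mathfrak{F}_t\}$ versus full--expectation terms matches exactly the point where the paper's calculation is most delicate.
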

\begin{remark}
	By assumption $(A 5.2)$, the conditions \eqref{max1} and \eqref{sufH} are equivalent.
\end{remark}
\begin{proof}
	We use the same short-hand notations which are defined in Section 4. Moreover, we denote 
	\begin{align*}
		H(t)&=H(t,x(t),u(t),p(t),q(t)),\\
		\widehat{H}(t)&=H(t,\widehat{x}(t),\widehat{u}(t),p(t),q(t)).
	\end{align*}
	Since $\varphi$ is convex, the following inequality holds:
	\allowdisplaybreaks
	\begin{align}\label{deltaJ}
		&  J\left(  \widehat{u}\right)  -J\left(  u\right) \\
		&  \leq-\mathbf{E}\left\langle \varphi_{x}\left(  \widehat{x}\left(
		t_{N+1}\right)  ,\mathbf{E}\widehat{x}\left(  t_{N+1}\right)  \right)
		-\mathbf{E}\varphi_{y}\left(  \widehat{x}\left(  t_{N+1}\right)
		,\mathbf{E}\widehat{x}\left(  t_{N+1}\right)  \right)  ,\widehat{x}\left(
		t_{N+1}\right)  -x\left(  t_{N+1}\right)  \right\rangle \nonumber\\
		&  +\mathbf{E}\sum_{t=t_{0}}^{t_{N}}
		\left(  \widehat{l}\left[  t\right]  -l\left[  t\right]  \right).\nonumber
	\end{align}
	Therefore, the first term of \eqref{deltaJ} will be as follows:
	\begin{align*}
		&  \mathbf{E}\left\langle p\left(  t_{N+1}\right)  ,\widehat{x}\left(
		t_{N+1}\right)  -x\left(  t_{N+1}\right)  \right\rangle   =\sum_{t=t_{0}}^{t_{N}}\Delta\mathbf{E}\left\langle p\left(  t\right)
		,\widehat{x}\left(  t\right)  -x\left(  t\right)  \right\rangle\\
		& =\sum
		_{t=t_{0}}^{t_{N}}\left(  \mathbf{E}\left\langle \Delta p\left(  t\right)
		,\widehat{x}\left(  t\right)  -x\left(  t\right)  \right\rangle +\mathbf{E}\left\langle p\left(  t+h\right)  ,\Delta\widehat{x}\left(  t\right)  -\Delta
		x\left(  t\right)  \right\rangle \right) \\
		&  =-\sum_{t=t_{0}}^{t_{N}}\Big\langle \left(  h\widehat{f}_{x}^{\intercal}\left[
		t\right]  \right)  \mathbf{E}\left\{  p\left(  t+h\right)  \mid\mathfrak{F}_{t}\right\}  +\mathbf{E}\left\{  \widehat{f}_{y}^{\intercal}\left[  t\right]  p\left(
		t+h\right)  \right\} \\
		& +\sum_{j=1}^{d}\left(  \widehat{\sigma}_{x}^{j}\left[  t\right]  \right)  ^{\intercal}q^{j}\left(
		t\right)  +\mathbf{E}\left\{  \left(  \widehat{\sigma}_{y}^{j}\left[  t\right]  \right)
		^{\intercal}q^{j}\left(  t\right)  \right\}  ,\widehat{x}\left(  t\right)
		-x\left(  t\right)  \Big\rangle \\
		&+\sum_{t=t_{0}}^{t_{N}}\left\langle \left( \widehat{l}_{x}[t]+\textbf{E}\widehat{l}_{y}[t]\right),\widehat{x}\left(  t\right)
		-x\left(  t\right)  \right\rangle \\
		&  +\sum_{t=t_{0}}^{t_{N}}\mathbf{E}\left\langle p\left(  t+h\right)
		,h\left(  \widehat{f}\left[  t\right]  -f\left[  t\right]  \right)  +\sum_{j=1}^{d}\left(  \widehat{\sigma}^{j}\left[  t\right]  -\sigma^{j}\left[  t\right]
		\right)  w_{h}^{j}\left(  t\right)  \right\rangle \\
		&  =-\sum_{t=t_{0}}^{t_{N}}\Big\langle \left(  h\widehat{f}_{x}^{\intercal}\left[
		t\right]  \right)  \mathbf{E}\left\{  p\left(  t+h\right)  \mid\mathfrak{F}_{t}\right\}  +\mathbf{E}\left\{  \widehat{f}_{y}^{\intercal}\left[  t\right]  p\left(
		t+h\right)  \right\}\\
		&  +\sum_{j=1}^{d}\left(  \widehat{\sigma}_{x}^{j}\left[  t\right]  \right)  ^{\intercal}q^{j}\left(
		t\right) +\mathbf{E}\left\{  \left(  \widehat{\sigma}_{y}^{j}\left[  t\right]  \right)
		^{\intercal}q^{j}\left(  t\right)  \right\}  ,\widehat{x}\left(  t\right)
		-x\left(  t\right)  \Big\rangle \\
		&+\sum_{t=t_{0}}^{t_{N}}\left\langle \left( \widehat{l}_{x}[t]+\textbf{E}\widehat{l}_{y}[t]\right),\widehat{x}\left(  t\right)
		-x\left(  t\right)  \right\rangle  +\sum_{t=t_{0}}^{t_{N}}\mathbf{E}\left(  \widehat{H}\left[  t\right]
		-H\left[  t\right]  \right)  +\sum_{t=t_{0}}^{t_{N}}\mathbf{E}\left(
		\widehat{l}\left[  t\right]  -l\left[  t\right]  \right),
	\end{align*}
	where in the last step we have used the definition of Hamiltonian function. Finally, we differentiate the Hamiltonian and use the convexity of functions to get for all $t \in \mathbb{T}$, $\mathbb{P}-a.s.$,
	\begin{align*}
		\sum_{t=t_{0}}^{t_{N}}\mathbf{E}\left(  \widehat{H}\left[  t\right]  -H\left[
		t\right]  \right)  &\leq\sum_{t=t_{0}}^{t_{N}}\mathbf{E}\left\langle
		\widehat{H}_{x}\left[  t\right]  +\widehat{H}_{y}\left[  t\right]
		,\widehat{x}\left(  t\right)  -x\left(  t\right)  \right\rangle \\
		&+\sum
		_{t=t_{0}}^{t_{N}}\mathbf{E}\left\langle \widehat{H}_{u}\left[  t\right]
		,\left(  \widehat{u}\left(  t\right)  -u\left(  t\right)  \right)
		\right\rangle	\\
		&\leq \sum_{t=t_{0}}^{t_{N}}\mathbf{E}\left\langle
		\widehat{H}_{x}\left[  t\right]  +\widehat{H}_{y}\left[  t\right]
		,\widehat{x}\left(  t\right)  -x\left(  t\right)  \right\rangle, 
	\end{align*}
	where we have applied $\left\langle \widehat{H}_{u}\left[  t\right]
	,\left(  \widehat{u}\left(  t\right)  -u\left(  t\right)  \right)
	\right\rangle\leq 0$ in the last step due to the minimum condition \eqref{sufH}. Combining the above inequalities gives us
	\begin{align*}
		J\left(  \widehat{u}\right)  -J\left(  u\right)&  \leq \mathbf{E}\left\langle p\left(  t_{N+1}\right)  ,\widehat{x}\left(
		t_{N+1}\right)  -x\left(  t_{N+1}\right)  \right\rangle +\mathbf{E}\sum_{t=t_{0}}^{t_{N}}
		\left(  \widehat{l}\left[  t\right]  -l\left[  t\right]  \right)\\
		&\leq \sum_{t=t_{0}}^{t_{N}}\mathbf{E}\left(  \widehat{H}\left[  t\right]  -H\left[
		t\right]  \right)-\sum_{t=t_{0}}^{t_{N}}\mathbf{E}\left\langle
		\widehat{H}_{x}\left[  t\right]  +\widehat{H}_{y}\left[  t\right]
		,\widehat{x}\left(  t\right)  -x\left(  t\right)  \right\rangle \leq 0
	\end{align*}
	and thus, $\widehat{u}$ is an optimal.
\end{proof}

		\section{Application}\label{sec:7}
	Portfolio selection is to seek a best allocation of wealth among a basket of securities. This model is the foundation of modern finance theory and inspired literally hundreds of extensions and applications.
	
	Our application is concerned with a discrete-time portfolio selection model that is formulated as  production and consumption choice optimization problems. The
	objective is to maximize the expected terminal return and minimize the variance
	of the terminal wealth. By putting weights on the two criteria one obtains a single
	objective stochastic control problem which is however not in the standard form
	due to the variance term involved.

	We suppose that an investor is able to invest his wealth to produce some production, and he can get profit from the production. Denote by $x(t)$ the capital of this investor at time $t$ and by $v(t)$ the rate of consumption.
	
	Now, we consider some risk in the investment
	process
	\begin{align}\label{ex1}
		\begin{cases}
			x\left(  t+h\right)  =x\left(  t\right)  +h\left(  f\left(  x\left(  t\right)
			\right)  -\delta x\left(  t\right)  \right)  -v\left(  t\right)
			+\sigma\left(  x\left(  t\right)  \right)  w_{h}\left(  t\right)  ,\\
			x\left(  0\right)  =x_{0}\in \mathbb{R}^{+},\qquad t=t_{0},...,t_{N},
		\end{cases}
	\end{align}
	where $f\left(  x\right)  $ is the income production, $\delta$ is the
	depreciation rate of the capital, $\sigma\left(  x\right)  $ denotes the
	effect of influenced by the exogenous environment and $w_{h}\left(  t\right)
	$ are the $1$-dimentional white noises. Our objective is to choose the optimal
	consumption rate $v\left(  t\right)  \geq0$ to maximize the following
	functional $J\left(  v\right)  :$
	\begin{equation}\label{J}
		J\left(  v\right)  =\textbf{E}x\left(  t_{N+1}\right)  +\textbf{E}\sum\limits_{k=0}^{N}l\left(  v\left(  t_{k}\right)  \right)  ,	
	\end{equation}
	
	where $x\left(  t_{N+1}\right)  $ is the capital left over after consumption
	in the last period $t_{N+1}$. $l$ is the utility function given by%
	\[
	l\left(  v\right)  =\frac{\delta}{\delta-1}v^{1-\frac{1}{\delta}%
	},\ \ \ 0<\delta<1.
	\]
	Then Hamiltonian function is%
	\[
	H\left(  t,v\right)  :=l\left(  v\right)  +hp\left(  t+h\right)  \left(
	f\left(  x\left(  t\right)  \right)  -\delta x\left(  t\right)  -v\right)
	+q\left(  t\right)  \sigma\left(  x\left(  t\right)  \right)
	\]
	and the corresponding backward stochastic difference equation is%
	\[
	\left\{
	\begin{array}
		[c]{l}%
		p\left(  t\right)  =\left(  1+hf_{x}\left(  x\left(  t\right)  \right)
		\right)  \mathbf{E}\left\{  p\left(  t+h\right)  \mid\mathfrak{F}_{t}\right\}
		+\sigma_{x}\left(  x\left(  t\right)  \right)  q\left(  t\right) \\
		q\left(  t\right)  =\mathbf{E}\left\{  p\left(  t+h\right)  w_{h}\left(
		t\right)  \mid\mathfrak{F}_{t}\right\} \\
		p\left(  t_{N+1}\right)  =1,\ q\left(  t_{N}\right)  =0.
	\end{array}
	\right.
	\]
	Assume that $f\left(  x\right)  =x$ and $\sigma\left(  x\right)  =\frac{1}%
	{2}x.$ Then the Hamiltonian function is
	\begin{align*}
		H\left(  t,v\right)   &  =\frac{\delta}{\delta-1}v^{1-\frac{1}{\delta}%
		}+hp\left(  t+h\right)  \left(  x\left(  t\right)  -\delta x\left(  t\right)
		-v\right)  +\frac{1}{2}q\left(  t\right)  x\left(  t\right) .
	\end{align*}
	Solving the equation with respect to $v$:
	\begin{align*}
		H_{v}\left(  t,v\right) =v^{-\frac{1}{\delta}}-hp\left(  t+h\right)
		=0,  
	\end{align*}
	we obtain 
	\begin{equation*}
		v\left(  t\right) =h^{-\delta}p^{-\delta}\left(  t+h\right), \quad t \in \mathbb{T}.
	\end{equation*}
	By Theorem \ref{thm:1st order}, we know that $\left\lbrace v(t)\right\rbrace $ is the optimal consumption rate for the optimization problem \eqref{ex1}-\eqref{J}. 
	
	If $N=5$ then we have
	\begin{align*}
		p\left(  6h\right)   &  =1,\ q\left(  5h\right)  =0,\\
		p\left(  5h\right)   &  =h\left(  2-\delta\right)  \mathbf{E}\left\{  p\left(
		6h\right)  \mid\mathfrak{F}_{t}\right\}  +\frac{1}{2}0=h\left(  2-\delta
		\right)  ,\\
		q\left(  4h\right)   &  =0,\\
		p\left(  4h\right)   &  =h\left(  2-\delta\right)  \mathbf{E}\left\{  h\left(
		2-\delta\right)  \mid\mathfrak{F}_{t}\right\}  =h^{2}\left(  2-\delta\right)
		^{2}.
	\end{align*}
	The trajectory of $v(t)$ with $\delta=0.5$ and $h=0.5$ will be illustrated by the following figure:
	
	\begin{figure}[H]
		\centering
		\includegraphics[width=0.6\linewidth]{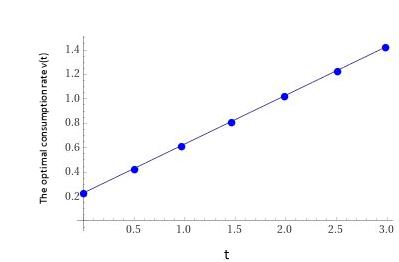}
		\caption{Trajectories of optimal consumption rate $v(t)$}
	\end{figure}
	\section{Conclusions}
	The core of this paper is to study a new version of maximum principle of discrete-time mean-field stochastic systems and to provide a proof of main result stated in Theorem \ref{thm:1st order}. As a consequence of the main theorem constructing stochastic maximum principle to the discrete-time stochastic systems, by comparing these results with some existing results in the literature proved from different point of view.
	
	The main contributions of our work are described in detail as follows:
	\begin{itemize}
		\item We established a new version of the maximum principle for discrete-time mean-field stochastic optimal control problems and the first-order necessary and sufficient optimality conditions for discrete-time stochastic optimal control problems; 
		\item We introduced discrete-time backward (matrix) stochastic equation. Based on the discrete-time backward stochastic equations, we have obtained necessary first-order and second-order optimality conditions for the stochastic discrete optimal control problem \eqref{op1}-\eqref{cost};
		\item Finally, we considered an application as a kind of optimization problem for production and consumption.
	\end{itemize} 	
	Other related research directions in maximum principle may include the various relevant topics that may be useful in the optimal control theory of stochastic differential equations with mean-field type, e.g., one can consider a maximum principle of a fractional analogue of discrete and continuous time stochastic differential equations. Our method can also be applied to more complicated discrete-time stochastic optimal control problems, for example, problems with delays, terminal constraint problems, and problems with neutral term.
	
Although there are many articles on the maximum principle of stochastic and deterministic systems, there still remain many other interesting open problems concerning their fractional analogues, which can be extended by methods analogous to those used for fractional derivations of Caputo and Riemann-Liouville type. To this end, one can consider the method given in \cite{yusubov-mahmudov} to study the optimal control problem in which a dynamical system is controlled by a nonlinear Caputo fractional state equation.


\begin{thebibliography}{99}  
		\bibitem{milyutin}
		Dubovitskii, A.Y., Milyutin, A.A., Extremum problems with constraints, Dokl, Akad, Nauk, USSR, English transl. in Sovet Math. Dokl., 4(1963) 2, 452-455.
		
		\bibitem {halkin}
		Halkin, H., On the Necessary Conditions for the Optimal Control of Nonlinear Systems, J. Math. Anal. 12, 1--82 (1964)
		
		\bibitem {holtzman}
		Holtzman, J.M., Convexity and the Maximum Principle for Discrete Systems, IEEE Trans. Automat. Control 11, 30--35 (1966)
		\bibitem{holtzman-halkin}
		Holtzman, JM, Halkin, H, Directional convexity and the maximum principle for discrete systems. SIAM J. Control 4(2),
		213-275 (1966).
		
		\bibitem {pontr}
		Pontryagin, L.S., Boltyanskii, V.G., Gamkrelidze, R.V., Mishchenko, E.F.: Matematicheskaya teoriya optimal'nykh protsessov (Mathematical Theory of Optimal Processes), Moscow: Nauka, (1969).
		\bibitem{rozeoner}
		Rozonoer, LI., Pontryagin's maximum principle in theory of optimal systems. Avtomatika i Telemekhanika, 20(11), (1959), 1288–1302.
	
		
		\bibitem{jordanpolak}
		Jordan, BW., Polak, E., Theory of a class of discrete optimal control systems, University of California, Berkeley, Electronics Research Lab., Internal Tech. Memo 1,147, 1961.
		\bibitem{butkowski}
		Butkovskii, AG., On necessary and sufficient optimality conditions for impulse control systems. Avtom. Telemeh. 24,
		1056-1064 (1963) (in Russian)
		
		\bibitem {gk}
		Gabasov, RF., Kirillova, FM.,The qualitative theory of
		optimal processes (in Russian). Nauka, Moscow (1971)
		\bibitem{songliu}
		Song, T., Liu, B., A maximum principle for fully coupled controlled forward-backward stochastic difference systems of mean field-type, Adv. Diff. Equ. 2020, 188 (2020). https://doi.org/10.1186/s13662-020-02640-x.
		
		\bibitem {vinter}
		Vinter, RB., Optimality and Sensitivity of Discrete Time
		Processes, Control Cybernet. 17, 191--211 (1988)
		
		
		\bibitem{Gamkrelidze}
		Gamkrelidze, RV, Time-optimal processes with restricted phase coordinates, Doklady Akad Nauk, USSR, 3 (1959) 475-478.
		\bibitem{mah2019}
		Mahmudov, NI., Necessary first-order and second-order optimality conditions in discrete-time stochastic systems.
		J. Optim. Theory Appl. 182, 1001–1018 (2019)
		\bibitem {mahbash}
		Mahmudov, NI., Bashirov, AE., First order and second
		order necessary conditions of optimality for stochastic systems. Statistics
		and control of stochastic processes (Moscow, 1995/1996), 283--295, World Sci.
		Publ., River Edge, NJ, (1997)
		
		\bibitem {yong}
		Yong J., Zhou, X., Stochastic controls: Hamiltonian systems and
		HJB equations. Springer Verlag, (1999)
		
		\bibitem {rami}
		Rami, M., Chen, X., Zhou, X., Discrete-time indefinite LQ
		control with state and control dependent noises, Journal of Global
		Optimization, 23, 245--265 (2002)
		
		
		
		\bibitem {zhang}
		Zhang, H., Duan, G., Xie, L., Linear quadratic regulation for
		linear time-varying systems with multiple input delays, Automatica, vol. 42,
		1465--1476 (2006)
		
		
		\bibitem {blot}
		Blot, J., An infinite-horizon stochastic discrete-time
		pontryagin principle, Nonlinear Analysis: Theory, Methods \& Applications, 71,
		999--1004 (2009)
		
		
		
		\bibitem {wang}
		Wang, H., Zhang, H., Wang, X., Optimal control for stochastic
		discrete-time systems with multiple input-delays, Proceedings of the 10th
		World Congress on Intelligent Control and Automation, Beijing, China, July
		6-8, 1529--1534 (2012)
		
		
		\bibitem {mur}
		Mingaleeva, ZT., Shvartsman, IA., Second-order necessary
		optimality conditions for a discrete optimal control problem. Translation of
		Differ. Uravn. 50, 1640--1646 (2014)
		
		\bibitem {lin}
		Lin, X., Zhang, W., A maximum principle for optimal control of
		discrete-time stochastic systems with multiplicative noise. IEEE Trans.
		Automat. Control 60, 1121--1126 (2015).
		
		
		
		\bibitem {mah1} 
		Mahmudov NI., Maximum Principle for Stochastic Discrete-Time
		It\^{o} Equations , Brownian Motion: Elements, Dynamics and Applications,
		Chapter 6, Nova (2015)
		
		
		
		
		
		
		
		
		
		
		
		
		
		
		
		\bibitem {Djehiche}
		Buckdahn, R., Djehiche, B., Li, J., A
		general stochastic maximum principle for SDEs of mean-field
		type, Applied Mathematics and Optimization, vol. 64, no. 2, pp. 197-216, October 2011.
		
		\bibitem {Buckdahn}
		Buckdahn, R., Djehiche, B., Li, J., Peng, S., Mean-field backward stochastic differential equations: a
		limit approach, Annals of Probability, vol. 37, no. 4, pp.
		1524-1565, 2009.
		
		\bibitem{peng}
		Peng, S., A general stochastic maximum principle for optimal control problems, SIAM Journal on Control and Optimization 28 (1990) 966–979.
		\bibitem{wuzhang}
		Wu, Z., Zhang, F., Maximum principle for discrete-time stochastic optimal control problem and stochastic game. Mathematical Control \& Related fields, 12(2) 2022 475-493. 
		
		
		\bibitem{hu-ji-xue}
		Hu, M., Ji, S., Xue, X., A global stochastic maximum principle for fully coupled forward-backward stochastic systems, SIAM J. Control Optim., 56 (2018), 4309-4335.
	
	\bibitem{yongsiam}
	 Yong, J., Optimality variational principle for controlled forward-backward stochastic differential equations with mixed initial-terminal conditions, SIAM J. Control Optim., 48(2010), 4119-4156.
	 \bibitem{wu}
	 Wu, Z., A general maximum principle for optimal control of forward–backward stochastic systems,
	 Automatica, 49(2013), 1473-1480.
	 
	 \bibitem{yusubov-mahmudov}
	 Yusubov, Sh., Mahmudov, E., Optimality conditions of singular controls for
	 systems with Caputo fractional derivatives, J. Ind. Manag. Optim., (2021) doi: 10.3934/jimo.2021182. 
	

	\end{thebibliography}
\end{document}